\crefname{equation}{}{}
\newcommand{\R}{\mathbb{R}}
\newcommand{\Z}{\mathbb{Z}}
\newcommand{\N}{\mathbb{N}}
\newcommand{\ee}{\mathrm{e}}
\DeclareDocumentCommand\dd{ o g d() }{
	\IfNoValueTF{#2}{
		\IfNoValueTF{#3}
			{\mathrm{d}\IfNoValueTF{#1}{}{^{#1}}}
			{\mathinner{\mathrm{d}\IfNoValueTF{#1}{}{^{#1}}\argopen(#3\argclose)}}
		}
		{\mathinner{\mathrm{d}\IfNoValueTF{#1}{}{^{#1}}#2} \IfNoValueTF{#3}{}{(#3)}}
	}
\newcommand{\dx}{\dd{x}}
\newcommand{\dy}{\dd{y}}
\newcommand{\ds}{\dd{s}}
\newcommand{\dt}{\dd{t}}
\newcommand{\del}{\partial}
\newcommand{\eps}{\varepsilon}
\newcommand{\im}{\mathrm{i}}
\newcommand{\K}{\mathscr{K}} 
\newcommand{\SW}{\mathscr{S}}
\newcommand{\B}{\mathscr{P}}
\DeclareMathOperator{\dist}{dist}
\DeclareMathOperator{\pernorm}{d}
\let\P\relax 
\DeclareMathOperator{\P}{\mathbb{P}} 
\DeclareMathOperator{\E}{\mathbb{E}} 
\newcommand{\vcc}{\vcentcolon}
\DeclarePairedDelimiter\abs{\lvert}{\rvert}
\DeclarePairedDelimiter\norm{\Vert}{\rVert}
\theoremstyle{plain}
\newtheorem{theorem}{Theorem}[section]
\newtheorem{lemma}[theorem]{Lemma}
\newtheorem{proposition}[theorem]{Proposition}
\theoremstyle{definition}
\theoremstyle{remark}
\newtheorem{remark}[theorem]{Remark}
\title{Power Network Dynamics on Graphons}
\date{}
\begin{document}

 \author{Christian Kuehn\thanks{Technical University of Munich, Faculty of Mathematics, Research Unit 
``Multiscale and Stochastic Dynamics'', 85748 Garching b.~M\"unchen, Germany, \texttt{ckuehn@ma.tum.de}}~
and~Sebastian Throm\thanks{Technical University of Munich, Faculty of Mathematics, Research Unit 
``Multiscale and Stochastic Dynamics'', 85748 Garching b.~M\"unchen, Germany, \texttt{throm@ma.tum.de}} }

\maketitle

\begin{abstract}
Power grids are undergoing major changes from a few large producers to smart
grids build upon renewable energies. Mathematical models for power grid dynamics
have to be adapted to capture, when dynamic nodes can achieve synchronization to a common
grid frequency on complex network topologies. In this paper we study a second-order 
rotator model in the large network limit. We merge the recent theory of random graph 
limits for complex small-world networks with approaches to first-order systems on 
graphons. We prove that there exists a well-posed continuum limit integral equation 
approximating the large finite-dimensional case power grid network dynamics. Then we
analyse the linear stability of synchronized solutions and prove linear stability. However, 
on small-world networks we demonstrate that there are topological parameters moving the
spectrum arbitrarily close to the imaginary axis leading to potential instability on
finite time scales.  
\end{abstract}

\section{Introduction}\label{Sec:Introduction}

In this work we focus on the study of the system of differential equations
\begin{equation}
\label{eq:Braess:0}
 \frac{\dd^2}{\dt^2}\phi_{k}=-\alpha \frac{\dd}{\dt}\phi_{k}+
\sum_{\ell=1}^{N}K_{k,\ell}\sin(\phi_{\ell}-\phi_{k})+P_{k},\qquad \phi_k=\phi_k(t).
\end{equation}
where $\phi_k$ is the phase of the $k$-th component of an electro-mechanical element 
in a power grid/network with $k\in\{1,2,\ldots,N\}$, $\alpha\in\R$ is a damping 
parameter, $P_k\in\R$ represents production ($P_k>0$) or consumption ($P_k<0$), and
$K_{k,l}$ is the coupling matrix between elements. In this introduction, we provide a 
non-technical overview of our results. The technical part of this work starts 
in Section~\ref{Sec:derivemodel} with the derivation of the model~\eqref{eq:Braess:0}.
The basic idea of \eqref{eq:Braess:0} is that $\phi_k$ represents a phase difference to
a standard reference phase $\mathfrak{F}$, to which all producers and consumers in a 
power grid~\cite{MachowskiBialekBumby} should synchronize. In particular, 
the model~\eqref{eq:Braess:0}
describes self-synchronization~\cite{Kur84,PikovskyRosenblumKurths,Strogatz2} 
effects without external controls. The ODEs~\eqref{eq:Braess:0}, and several of its 
variations have received very significant attention 
recently in power grid 
modelling~\cite{MenckHeitzigKurthsSchellnhuber,Motteretal,NishikawaMotter,
RohdenSorgeTimmeWitthaut,RohdenSorgeWitthautTimme,WiT12}; see also~\cite{DoerflerBullo} 
for a broader survey. A key motivation for this surging interest in self-synchronization
of power grids arises due to the paradigmatic shift from very few central power
plant sources to a multi-faceted network of smaller producers via renewable 
energies (solar, wind, etc). For a distributed grid, it is far more challenging 
to properly judge the influence of controls, showing the pressing need to design
``smart grids''. In this context, a natural first question to study
is, under which conditions a power grid self-synchronizes without control. 

For low-dimensional variants of~\eqref{eq:Braess:0}, i.e., for very small $N$, 
classical non-linear dynamics and bifurcation theory provide already many
insightful answers if one studies the existence and stability of steady states 
for~\eqref{eq:Braess:0}. However, even in this case, potentially counter-intuitive
effects can appear. For example, the addition of a line between oscillators 
corresponding to changing $K_{k,\ell}$ from a zero to a non-zero value can 
destabilize synchronous solutions~\cite{WiT12,ColettaJacquod}. This effect is a
manifestation of Braess' paradox~\cite{Braess} well-known from transportation
networks~\cite{SteinbergZangwill}. However, small grids do not properly 
represent the large-scale modern distributed grids, so we have to 
study~\eqref{eq:Braess:0} for large $N$. Evidently one then has to ask about
the topology of the graph $G_{N}$ with (weighted) adjacency 
matrix $(K_{k,\ell})_{k,\ell=1}^N$. Modern power grids have a complex network
structure~\cite{AlbertBarabasi,New03} well-characterized by the many available
large-scale power-law distribution random graph models, such as small-world
networks~\cite{WattsStrogatz}. Although there are many excellent 
numerical and formal studies of~\eqref{eq:Braess:0} for complex 
networks~\cite{MenckHeitzigKurthsSchellnhuber,Motteretal,NishikawaMotter,
RohdenSorgeTimmeWitthaut,RohdenSorgeWitthautTimme,WiT12}, there are currently 
not many available rigorous analytical methods/studies.

In this work, we provide a novel tool to rigorously approximate the dynamics
of complex network power grid models of the form~\eqref{eq:Braess:0} via a continuum 
limit as $N\rightarrow \infty$. In our context, we show how to transfer the recently 
emerging theory of integral equations on graphons~\cite{Lovasz,LovaszSzegedy} to 
power network models, and more general second-order ODE systems. Graphons are 
natural limit objects for certain classes of graphs $G_N$ as $N\rightarrow \infty$. 
Graphons can be viewed as functions over the unit square 
$I\times I=[0,1]\times [0,1]$. They are obtained by using the adjacency matrix of the 
graph to generate a step function $\K^{(N)}(x,y)$ over $I\times I$. Upon a suitable 
rescaling and using a reduction by graph homomorphisms, the step function converges
as $N\rightarrow \infty$ to the graphon $\K^{(\infty)}(x,y)$; see also 
Section~\ref{Sec:model}. It has
been shown in an important recent sequence of works~\cite{Med14,Med14a,KaM17} that certain 
classes of first-order ODEs on complex networks $K_{k,\ell}$ can lead to a continuum limit, 
which is a non-local analogue of classical local reaction-diffusion partial differential
equations (PDEs). In particular, if we start with second-order ODEs of the form
\begin{equation}
\label{eq:ODEintro}
 \frac{\dd^{2}}{\dd{t}^2}\phi_{k}^{(N)}=-\alpha\frac{\dd}{\dd{t}}\phi_{k}^{(N)}
+ \frac{1}{N}\sum_{\ell=1}^{N}K_{k,\ell}^{(N)}D(\phi_{\ell}^{(N)}-\phi_{k}^{(N)})
+f(\phi_{k}^{(N)},t),
\end{equation}
for $k\in\{1,2,\ldots,N\}$, then the continuum limit turns out to be
\begin{equation}
\label{eq:ODE:as:inteqintro}
 \frac{\dd^{2}}{\dd{t}^2}\phi(x,t)+\alpha\frac{\dd}{\dd{t}}\phi(x,t)=
\int_{I}\K^{(\infty)}(x,y)D\bigl(\phi(y,t)-\phi(x,t)\bigr)\dy+f\bigl(\phi(x,t),t\bigr)
\end{equation}
under suitable conditions upon the functions $D$ and $f$. To incorporate the idea 
of complex power grid topologies, we are going to establish the limiting equation
also in the context, when the graph $G_N$ is random.
Using the continuum limit 
of~\eqref{eq:Braess:0}, we are going to study stability of homogeneous steady 
states for networks of small-world type. This provides insight into the effect 
of the \emph{graph topology on stability}. For example, we show
that decreasing the density of long-range connections decreases a stability index. 
In summary our main results in a non-technical form are the following:

\begin{itemize}
 \item[(R1)] Under suitable conditions, the models \eqref{eq:ODEintro} 
and~\eqref{eq:ODE:as:inteqintro} are well-posed (Proposition~\eqref{Prop:well:posed:cont:limit}). 
 \item[(R2)] On any finite time interval $[0,T]$, the second-order 
ODEs~\eqref{eq:ODEintro} converge in $L^2(I\times I)$ to the continuum 
limit~\eqref{eq:ODE:as:inteqintro} if the initial conditions for both models converge
to each other and graphon convergence takes places 
as $N\rightarrow \infty$ (Theorem \ref{Thm:continuum:limit}).
 \item[(R3)] The same result as in (R2) applies also to certain classes of random
graphs in a suitable averaged sense (Theorem \ref{Thm:continuum:limit:random}).
 \item[(R4)] For $D(\cdot)=\sin(\cdot)$, a balanced power grid, and for a class of 
symmetric graphon kernels $\K^{(\infty)}(x,y)=\K^{(\infty)}(x-y)$, there exist families 
of homogeneous steady states to \eqref{eq:Braess:0}, which are linearly stable
(Proposition~\ref{Prop:linear:stability}) for the physical situation of positive 
damping with $\alpha<0$.
 \item[(R5)] Yet, even if a linear stability result as in (R4) holds, we show that 
in a small-world setting we can use the graph parameters to move an eigenvalue from 
the linear analysis arbitrarily close to zero (see Section~\ref{Sec:instability}). 
\end{itemize}

Obviously, interpreting (R5) in practical terms means that external or internal 
small noise perturbations or going back to a finite-size case, may destabilize the grid, at
least on finite time scales. Hence, we provided a new tool to rigorously analyse the 
relation between small-world topology structures and the potential lack of practical 
stability of self-synchronized smart grids.

The paper is structured into two parts as our results (R1)--(R3) are actually very
general tools to deal with second-order ODEs on graphons, while the stability analysis
(R4)-(R5) focuses on our key application~\eqref{eq:Braess:0}. 

The first part of this article contains the proofs of the two main results, 
\cref{Thm:continuum:limit,Thm:continuum:limit:random}. The general strategy 
follows~\cite{Med14,KaM17} while several novel adaptations are required due to the 
fact that we are considering equations, which are of second order in time. Moreover, for 
the random case, it is a delicate step to consider a discrete averaged system as a technical 
tool to obtain asymptotically close solutions in probability to be able to eventually pass 
to the continuum limit as $N\to\infty$, which together is going to yield the main statements. 
In the second part of this article, the continuum limit is exploited to prove (R4)--(R5) 
using linearisation and Fourier techniques as well as graphons representing small-world 
random graphs. In this context, two new conserved quantities are identified and also the 
relation between topology and stability for self-synchronized power grids on small-world
topologies is studied.     

\section{Model Derivation}
\label{Sec:derivemodel}

For completeness, we include here the derivation of our main model~\eqref{eq:Braess:0},
which is very frequently used in recent power grid applications~\cite{WiT12}. Consider
$N$ electro-mechanical oscillators/rotators, which can be interpreted as producers
(e.g.~turbines) and consumers (e.g.~motors) depending upon the electric power $P_k$
generated ($P_k>0$) or consumed ($P_k<0$). The state variables of each oscillator 
are given by its mechanical phase angle $\theta_k=\theta_k(t)\in \mathbb{S}^1=
\mathbb{R}/\mathbb{Z}$ and its velocity $\dd{\theta}_k/\dd{t}=\dot{\theta}_k\in\R$. 
To have a synchronized
grid, one wants to have a common frequency $\mathfrak{F}=2 \pi\times 50 \textnormal{Hz}$ 
or $\mathfrak{F}=2 \pi\times 60 \textnormal{Hz}$. So it is natural to introduce the 
phase differences $\phi_k$ to the reference frequency by
\begin{equation}
\phi_k(t):=\theta_k(t)-\mathfrak{F} t, 
\end{equation}  
where we always have $k\in\{1,2,\ldots,N\}=:[N]$. Deriving equations for the phase 
differences $\phi_k=\phi_k(t)$, we invoke energy conservation
\begin{equation}
\label{eq:powerbalance}
P_{\textnormal{source},k}=P_{\textnormal{diss},k}+P_{\textnormal{acc},k}
+\sum_{k=1}^N P_{\textnormal{trans},\ell k},
\end{equation}
where we have balanced generated/consumed (or source) power with dissipated, accumulated
and transmitted power. The easiest term is dissipation, which is just given by 
$P_{\textnormal{diss},k}=\kappa_k(\dot{\theta}_k)^2$, where $\kappa_k$
is the friction coefficient of the $k$-th oscillator. The kinetic energy of each oscillator is 
$E_{\textnormal{kin},k}=\mathcal{I}_k (\dot{\theta}_k)^2/2$, where 
$\mathcal{I}_k$ is the moment of inertia. Accumulated power is then the time
derivative $P_{\textnormal{acc},k}=\dd{E_{\textnormal{kin},k}}/\dd{t}$. Next, we
assume that power transmission only depends upon the relative phase difference
$\theta_\ell-\theta_k$ via the leading-order term of the Fourier expansion of the
transmission function, i.e., transmission is proportional to $\sin(\theta_\ell-\theta_k)$.
Incorporating the maximal capacity $P_{\max,\ell k}$ of a transmission line we get
\begin{equation}
P_{\textnormal{trans},\ell k} = P_{\max,\ell k}\sin(\theta_\ell-\theta_k).
\end{equation}
Now one can insert the individual power terms into~\eqref{eq:powerbalance}. Under
the assumption that phase changes are slow variables compared to the reference 
frequency, $|\dot{\theta}_k|\ll \mathfrak{F}$, we may view terms of the
form $(\dot{\phi}_k)^2$ and $\dot{\phi}_k\ddot{\phi}_k$ as higher-order
terms, which then leads to 
\begin{equation}
\label{eq:physicalmodel}
\mathcal{I}_k\mathfrak{F}\ddot{\phi}_k = P_{\textnormal{source},k}-
\kappa_k \mathfrak{F}^2-2\kappa_k \mathfrak{F}\dot{\phi}_kl+
\sum_{\ell=1}^N P_{\max,\ell k}\sin(\theta_\ell-\theta_k).
\end{equation}
The model in the current form contains many free parameters and the basic case
studied in most applications first is to consider equal moment of inertia $\mathcal{I}$ 
and equal friction coefficient $\kappa$ for each machine. Using a non-dimensionalisation 
via
\begin{equation}
P_k:=\frac{P_{\textnormal{source},k}-\kappa \mathfrak{F}^2}{\mathcal{I}\mathfrak{F}^2},\qquad
\alpha:=2\kappa \mathcal{I},\qquad K_{k,\ell}:=\frac{P_{\max,\ell k}}{\mathcal{I}\mathfrak{F}}
\end{equation}
we indeed obtain~\eqref{eq:Braess:0} from \eqref{eq:physicalmodel}. Although we made an assumption on
the parameters, we are going to later on allow for randomness in $K_{k,\ell}$ 
so that parametric uncertainty is incorporated into the model making it even more realistic
than many standard formulations used frequently in applications. We are particularly interested 
in situations, where not all oscillators are coupled to each other but instead the machines 
are connected via a graph of $N$ nodes and the kernel $K_{k,\ell}$ characterizes if a connection 
exists between two nodes or not. Precisely, if the nodes $k$ and $\ell$ are connected, we 
have $K_{k,\ell}>0$ while $K_{k,\ell}=0$ corresponds to the case where $k$ and $\ell$ are 
uncoupled. 

We remark that one further common simplification arises if we have $P_{k}\equiv P$ for 
all $k\in\{1,\ldots N\}$. In this situation, by means of the rescaling $\phi_{k}\mapsto 
\phi_{k}+(Pt)/\alpha$ we can rewrite~\eqref{eq:Braess:0} as
\begin{equation}
\label{eq:Braess:1}
 \frac{\dd^2}{\dt^2}\phi_{k}=-\alpha \frac{\dd}{\dt}\phi_{k}+
\sum_{\ell=1}^{N}K_{k,\ell}\sin(\phi_{\ell}-\phi_{k}).
\end{equation}
One immediately checks that a stationary steady state for~\eqref{eq:Braess:1} is given by 
each constant $\Phi_{k}\equiv \Phi$ with $\Phi\in\R$.

\section{The Large Graph Limiting Model}
\label{Sec:model}

In this section, we present the background on graph convergence which is necessary for our studies and moreover, we precisely state the conditions under which we are able to pass to the continuum limit in~\eqref{eq:ODEintro}.

\subsection{Deterministic graphs and graph limits}
\label{Sec:deterministic:graphs}

To emphasise the connection to the underlying graph structure as announced in \cref{Sec:Introduction,Sec:derivemodel} let us view~\eqref{eq:Braess:1} from a slightly different perspective. More precisely, following the notation in~\cite{Med14, Med14a, Med14b} we denote by $G_{N}$ the graph of $N$ nodes which describes the coupling of the oscillator system. In particular, $G_{N}=\langle V(G_{N}), E(G_{N}) \rangle$, where $V(G_{N})$ is the set of nodes/vertices and $E(G_{N})$ the set of edges of the graph. Without loss of generality, we can identify $V(G_{N})=[N]$ where we adopt the notation $[N]\vcc= \{1,\ldots N\}$. Then $(k,\ell)\in E(G_{N})$ if and only if there exists a direct link between the nodes $k$ and $\ell$, where we identify the set of edges with the corresponding subset of $[N]\times [N]$. With this terminology, \eqref{eq:Braess:1} can be written as
\begin{equation}\label{eq:Braess:2}
 \frac{\dd^2}{\dt^2}\phi_{k}=-\alpha \frac{\dd}{\dt}\phi_{k}+\sum_{\ell\colon (k,\ell)\in E(G_{N})}K_{k,\ell}\sin(\phi_{\ell}-\phi_{k}).
\end{equation}
As already mentioned, we want to consider large networks, i.e.\@ the case where $1\ll N$ and to this end, we want to pass to the limit $N\to\infty$ in~\eqref{eq:Braess:2}. To be able to do this, we need the notion of \emph{graph limits} (see e.g.\@~\cite{Lovasz,LovaszSzegedy,Med14}). For convenience, we recall the basic ideas here. As noted above, the nodes of the graph are parametrised through $V(G_{N})=[N]$ while then each vertex $k\in [N]$ can equivalently be identified with the subset $[(k-1)/N,k/N]$ of the fixed reference interval $I=[0,1]$. 

Moreover, the information on the edges $E(G_{N})$ of the graph is encoded in a function $\K^{(N)}\colon I\times I\to \R$ which is defined as
\begin{equation}\label{eq:graph:sequence}
 \K^{(N)}(x,y)=K_{k,\ell} \quad \text{if} \quad (k,\ell)\in E(G_{N}) \quad \text{and}\quad (x,y)\in \Bigl[\frac{k-1}{N}, \frac{k}{N}\Bigr]\times \Bigl[\frac{\ell-1}{N}, \frac{\ell}{N}\Bigr]
\end{equation}
and $\K^{(N)}(x,y)=0$ otherwise. For our purpose it is then convenient to say that the graph $G_{N}$ converges to some limit object as $N\to \infty$ if $\K^{(N)}$ converges to a limit function $\K^{(\infty)}$ in a suitable topology.

\begin{remark}
 Classically \cite{LoS06}, the convergence of graphs is defined by means of the convergence of the homomorphism density
 \begin{equation*}
  t(F,G)=\frac{\mathrm{hom}(F,G_{N})}{\abs{V(G_{N})}^{\abs{V(F)}}}
 \end{equation*}
where $\mathrm{hom}(F,G_{N})$ denotes the number of homomorphisms from a fixed finite graph $F$ to $G_{N}$. More precisely, $G_{N}$ is said to converge, if $t(F,G_{N})$ converges as $N\to\infty$ for each finite simple graph $F$. However, since we will only consider situations where we have the stronger condition that $\K^{(N)}\to \K^{(\infty)}$ in $L^{2}(I\times I)$ we limit ourself to this notion of graph convergence.
\end{remark}

Conversely, it is also possible to start with the function $\K^{(\infty)}$ and construct a corresponding sequence of graphs $G_{N}=\langle V(G_{N}), E(G_{N})\rangle$ which converges to $\K^{(\infty)}$. This approach has been taken for example in~\cite{Med14, Med14a, KaM17} and we recall the construction here for completeness.

Precisely, for a given measurable, bounded and almost everywhere continuous function $\K^{(\infty)}\colon I\times I\to \R_{\geq 0}$ we construct a sequence of approximating \emph{weighted} graphs $G_{N}=\langle V(G_{N}), E(G_{N})\rangle$ by setting
\begin{equation}\label{eq:graph:inverse:1}
  V(G_{N})=[N]\quad \text{and}\quad E(G_{N})\vcc = \biggl\{(k,\ell)\in[N]\times [N]\;\bigg|\; \K^{(\infty)}\Bigl(\frac{k}{N},\frac{\ell}{N}\Bigr)>0\biggr\}
\end{equation}
and the corresponding weight for an edge $(k,\ell)\in E(G_{N})$ is given by $\K^{(\infty)}(k/N,\ell/N)$.
We then have the following result which states convergence of this graph sequence to the corresponding limit graphon $\K^{(\infty)}$ (see \cite{Med14}).

\begin{lemma}\label{Lem:graph:inverse:convergence}
 Let $\K^{(\infty)}\colon I\times I\to \R_{\geq 0}$ be a bounded, symmetric and almost every continuous function. Then the corresponding graph sequence $G_{N}=\langle V(G_{N}), E(G_{N})\rangle$ as constructed in~\eqref{eq:graph:inverse:1} converges to $\K^{(\infty)}$ in $L^{2}(I\times I)$, i.e.\@
 \begin{equation*}
  \lim_{N\to\infty}\norm{\K^{(N)}-\K^{(\infty)}}_{L^{2}(I\times I)}=0
 \end{equation*}
 where similarly to~\eqref{eq:graph:sequence}, $\K^{(N)}(x,y)=\K^{(\infty)}(k/N,\ell/N)$ if $(x,y)\in [(k-1)/N,k/N]\times [(\ell-1)/N,\ell/N]$ .
\end{lemma}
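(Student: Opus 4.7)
The plan is to reduce the $L^{2}$ convergence to pointwise a.e.\ convergence combined with a uniform bound, and then invoke the Dominated Convergence Theorem on the finite measure space $I\times I$. Since the statement is essentially a quantitative continuity property of $\K^{(\infty)}$, no delicate graph-theoretic machinery is required; the symmetry assumption in fact plays no role, and only boundedness and continuity almost everywhere will be used.

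First I would fix $(x,y)\in I\times I$ and, for each $N\in\N$, define the indices $k(N,x),\ell(N,y)\in[N]$ by $x\in[(k-1)/N,k/N]$ and $y\in[(\ell-1)/N,\ell/N]$. Then by construction
\begin{equation*}
\K^{(N)}(x,y)=\K^{(\infty)}\Bigl(\tfrac{k(N,x)}{N},\tfrac{\ell(N,y)}{N}\Bigr),\qquad \Bigl\lvert\tfrac{k(N,x)}{N}-x\Bigr\rvert\leq \tfrac{1}{N},\quad \Bigl\lvert\tfrac{\ell(N,y)}{N}-y\Bigr\rvert\leq \tfrac{1}{N}.
\end{equation*}
Consequently, whenever $\K^{(\infty)}$ is continuous at $(x,y)$, the points $(k/N,\ell/N)$ converge to $(x,y)$ and hence $\K^{(N)}(x,y)\to \K^{(\infty)}(x,y)$. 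Since $\K^{(\infty)}$ is continuous almost everywhere in $I\times I$ by hypothesis, this yields pointwise convergence of $\K^{(N)}$ to $\K^{(\infty)}$ almost everywhere.

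Next I would exploit the boundedness assumption: if $M\vcc=\norm{\K^{(\infty)}}_{L^{\infty}(I\times I)}$, then $\abs{\K^{(N)}(x,y)}\leq M$ for all $N$ and all $(x,y)\in I\times I$, and the constant function $M$ is integrable (in fact square-integrable) on the finite-measure domain $I\times I$. Therefore $\abs{\K^{(N)}-\K^{(\infty)}}^{2}\leq (2M)^{2}$, which is an integrable majorant, and the integrand converges to zero almost everywhere. The Dominated Convergence Theorem then yields
\begin{equation*}
\lim_{N\to\infty}\norm{\K^{(N)}-\K^{(\infty)}}_{L^{2}(I\times I)}^{2}=\lim_{N\to\infty}\int_{I\times I}\abs{\K^{(N)}(x,y)-\K^{(\infty)}(x,y)}^{2}\,\dx\dy=0,
\end{equation*}
which is the claim.

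There is essentially no hard step here; the only point to keep in mind is that without further regularity (e.g.\ modulus of continuity) one cannot obtain a rate of convergence, but this is not required by the statement. The boundedness hypothesis is what makes the dominated convergence argument go through without extra effort; relaxing it to mere integrability would require an additional uniform integrability argument that is not needed in the present setting.
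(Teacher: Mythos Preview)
Your proof is correct and follows exactly the same approach as the paper's: pointwise a.e.\ convergence of $\K^{(N)}$ to $\K^{(\infty)}$ at continuity points, combined with the uniform bound, and then Lebesgue's dominated convergence theorem. The paper's proof is simply a two-line version of what you have written out in detail.
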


For completeness we include the short argument given by~\cite{Med14}

\begin{proof}
The construction of $\K^{(N)}$ yields $\K^{(N)}\to \K^{(\infty)}$ almost everywhere. Thus, due to the boundedness of $\K^{(\infty)}$ the claim follows from Lebesgue's dominated convergence theorem.
\end{proof}

\subsection{$\K$-random graphs}\label{Sec:random:graphs}

In this subsection, let us briefly recall the concept of $\K$-random graphs following mainly~\cite{KaM17} (see also~\cite{Med14a, Med14b}). The motivation for this is that we are also interested in oscillator networks whose underlying structure results from a stochastic process. More precisely, we are still interested in the model~\eqref{eq:Braess:1}, i.e.\@
\begin{equation}\label{eq:Braess:random:1}
 \frac{\dd^2}{\dt^2}\phi_{k}=-\alpha \frac{\dd}{\dt}\phi_{k}+\sum_{\ell=1}^{N}K_{k,\ell}(\omega)\sin(\phi_{\ell}-\phi_{k})
\end{equation}
but in contrast to the former, the coefficient matrix $K_{k,\ell}$ is now a random variable (as indicated by $\omega$). Moreover, in analogy to~\eqref{eq:graph:inverse:1} starting from a limiting graphon $\K^{(\infty)}\colon I\times I\to[0,1]$, we can construct sequences of so-called $\K^{(\infty)}$-random graphs $G_{N}(\omega)=\langle V(G_{N}), E(G_{N})\rangle$ satisfying
\begin{equation}\label{eq:graph:inverse:random:1}
  V(G_{N})=[N]\quad \text{and}\quad \P\bigl\{(k,\ell)\in E(G_{N})\bigr\}=\K^{(\infty)}\Bigl(\frac{k}{N},\frac{\ell}{N}\Bigr)\quad \text{for }k,\ell\in [N].
\end{equation} 
Here, the decision for each potential edge to be included in $E(G_{N})$ or not is made independently of the others. To make this construction more precise, we take for $N\in\N$ fixed the probability space
\begin{equation}\label{eq:random:coefficient:1}
 \bigl(\Omega_{N}=\{0,1\}^{N(N+1)/2}, 2^{\Omega_{N}},\P)
\end{equation}
from which we draw a random graph $G_{N}(\omega)=\langle V(G_{N}), E(G_{N})\rangle$. As explained in Section~\ref{Sec:deterministic:graphs} above, the latter is equivalent to determining the random coefficients $K_{k,\ell}^{(N)}(\omega)$ in~\eqref{eq:Braess:random:1} which are chosen to be independent Bernoulli random variables according to the condition
\begin{equation}\label{eq:random:coefficient:2}
 \E K_{k,\ell}^{(N)}(\omega)=\P\bigl(\{k,\ell\}\in E(G_{N})\bigr)=\K^{(\infty)}\Bigl(\frac{k}{N},\frac{\ell}{N}\Bigr)=\vcc \bar{K}_{k,\ell}^{(N)}.
\end{equation}

\subsection{Generalisations and the formal continuum limit}

Since it will not really cause any additional effort in deriving the continuum limit, instead of~\eqref{eq:Braess:1} or~\eqref{eq:graph:inverse:random:1}, we will consider the more general model
\begin{equation}\label{eq:ODE}
 \frac{\dd^{2}}{\dd{t}^2}\phi_{k}^{(N)}=-\alpha\frac{\dd}{\dd{t}}\phi_{k}^{(N)}+ \frac{1}{N}\sum_{\ell=1}^{N}K_{k,\ell}^{(N)}D(\phi_{\ell}^{(N)}-\phi_{k}^{(N)})+f(\phi_{k}^{(N)},t)\qquad \text{with }k\in[N].
\end{equation}
Here the coefficients $K_{k,\ell}^{(N)}$ may either be deterministic or stochastic. Moreover, our analysis allows for any $\alpha\in\R$. Note however, that $\alpha>0$ corresponds to the more physical situation of (positive) damping term whereas $\alpha>0$ leads to \emph{negative damping}.

\begin{remark}
 Note that we assume here that the coupling kernel $K_{k,\ell}$ is given by $\widetilde{K}_{k,\ell}/N$ which guarantees that the net field on each oscillator does not depend on $N$ (see for example~\cite[Section 5.4]{Kur84}). For simplicity, in the following we still write $K_{k,\ell}$ instead of $\widetilde{K}_{k,\ell}$.
 \end{remark}
 
Next, we define a function $\phi^{(N)}(x,t)$ with $x\in I$ through
\begin{equation}\label{eq:step:function}
 \phi^{(N)}(x,t)=\phi_{k}(t)\quad \text{if } \quad x\in\Bigl[\frac{k-1}{N}, \frac{k}{N}\Bigr)\quad \text{and}\quad k\in[N] 
\end{equation}
or equivalently $\phi^{(N)}(x,t)=\sum_{k=1}^{N}\phi_{k}(t)\chi_{[(k-1)/N,k/N]}$ where $\chi_{S}$ denotes the characteristic function of the set $S$.
With this function, \eqref{eq:ODE} can be rewritten as an integral equation, i.e.\@
\begin{multline}\label{eq:ODE:as:inteq}
  \frac{\dd^{2}}{\dd{t}^2}\phi^{(N)}(x,t)+\alpha\frac{\dd}{\dd{t}}\phi^{(N)}(x,t)\\*
  =\int_{I}\K^{(N)}(x,y)D\bigl(\phi^{(N)}(y,t)-\phi^{(N)}(x,t)\bigr)\dy+f\bigl(\phi^{(N)}(x,t),t\bigr).
\end{multline}
Assuming that $\phi^{(N)}$ converges to some $\phi(x,t)$ in a suitable sense, formally, we take the limit $N\to \infty$ in this equation. Thus, we expect that $\phi$ solves the integral equation
\begin{equation}\label{eq:cont:limit}
 \frac{\dd^{2}}{\dd{t}^2}\phi(x,t)+\alpha\frac{\dd}{\dd{t}}\phi(x,t)=\int_{I}\K^{(\infty)}(x,y)D\bigl(\phi(y,t)-\phi(x,t)\bigr)\dy+f\bigl(\phi(x,t),t\bigr)
\end{equation}
and $\phi$ denotes the \emph{continuum limit}.

\subsection{Assumptions and convergence to the continuum limit}

We collect in this section the basic assumptions on the non-linearities and we present the main results which we will show.

First of all, the non-linear functions $f$ and $D$ are assumed to be Lipschitz continuous. Precisely, this means that $D\colon \R \to \R$ and $f\colon \R\times[0,\infty)\to \R$ and there exist constants $L_{D}>0$ and $L_{f}>0$ such that
\begin{align}\label{eq:Lip:assumptions}
 \abs{D(u)-D(v)}\leq L_{D}\abs{u-v} \quad \text{and}\quad \sup_{t\in[0,T]}\abs{f(u,t)-f(v,t)}\leq L_{f}\abs{u-v}\quad \forall u,v\in\R.
\end{align}

The following proposition guarantees existence of solutions to the both equations~\eqref{eq:ODE} and~\eqref{eq:cont:limit}.

\begin{proposition}
\label{Prop:well:posed:cont:limit}
 Let $f$ and $D$ satisfy~\eqref{eq:Lip:assumptions} and let $g,h\in L^{\infty}(I)$ be given. Let furthermore $\K\in L^{\infty}(I\times I,\R_{\geq 0})$ and $\alpha\in\R$. Then the equation
 \begin{equation}\label{eq:cont:limit:general}
   \frac{\dd^{2}}{\dd{t}^2}\phi(x,t)+\alpha\frac{\dd}{\dd{t}}\phi(x,t)=\int_{I}\K(x,y)D\bigl(\phi(y,t)-\phi(x,t)\bigr)\dy+f\bigl(\phi(x,t),t\bigr)
 \end{equation}
 together with the initial conditions $\phi(\cdot, 0)=g$ and $\del_{t}\phi(\cdot,0)=h$ has a unique solution $\phi\in C^{2}(\R, L^{\infty}(I))$.
\end{proposition}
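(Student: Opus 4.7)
The plan is to reformulate the scalar second-order equation as a first-order system in the pair $(\phi, \psi)$, with $\psi \vcc= \partial_t \phi$, and to apply the Banach fixed point theorem in the Banach space $X_\tau \vcc= C([-\tau,\tau], L^\infty(I) \times L^\infty(I))$ for some small $\tau>0$. Integrating the system from $0$ yields the Volterra-type integral equations
\begin{align*}
\phi(x,t) &= g(x) + \int_0^t \psi(x,s) \ds, \\
\psi(x,t) &= h(x) + \int_0^t \Bigl[-\alpha \psi(x,s) + (J\phi)(x,s) + f(\phi(x,s), s)\Bigr] \ds,
\end{align*}
where $(J\phi)(x,s) \vcc= \int_I \K(x,y) D(\phi(y,s) - \phi(x,s))\dy$. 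A pair $(\phi,\psi)$ solves this system if and only if $\phi$ solves \eqref{eq:cont:limit:general} with the prescribed initial data; the second derivative in $t$ is then recovered by differentiating the equation for $\psi$.

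First, I would check that the map $T\colon X_\tau \to X_\tau$ defined by the right-hand sides is well-defined. The bound $\norm{J\phi(\cdot,s)}_{L^\infty(I)} \leq \norm{\K}_{L^\infty(I\times I)}(\abs{D(0)} + 2 L_D \norm{\phi(\cdot,s)}_{L^\infty(I)})$ comes from the Lipschitz assumption on $D$ and the boundedness of $\K$; a similar bound applies to $f(\phi(\cdot,s),s)$. Next, the Lipschitz assumptions~\eqref{eq:Lip:assumptions} yield
\begin{equation*}
\norm{(J\phi_1 - J\phi_2)(\cdot,s)}_{L^\infty(I)} \leq 2 L_D \norm{\K}_{L^\infty(I\times I)} \norm{(\phi_1 - \phi_2)(\cdot,s)}_{L^\infty(I)}
\end{equation*}
and $\norm{f(\phi_1(\cdot,s),s) - f(\phi_2(\cdot,s),s)}_{L^\infty(I)} \leq L_f \norm{(\phi_1-\phi_2)(\cdot,s)}_{L^\infty(I)}$. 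Setting $C \vcc= 1 + \abs{\alpha} + 2 L_D \norm{\K}_{L^\infty(I\times I)} + L_f$, one obtains $\norm{T(\phi_1,\psi_1) - T(\phi_2,\psi_2)}_{X_\tau} \leq C \tau \norm{(\phi_1,\psi_1) - (\phi_2,\psi_2)}_{X_\tau}$. Choosing $\tau < 1/C$ makes $T$ a strict contraction, so it has a unique fixed point in $X_\tau$.

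Since $C$ is independent of the solution, the local solution can be extended by iterating the construction on successive intervals $[k\tau,(k+1)\tau]$, $k\in\Z$, yielding a unique $(\phi,\psi)\in C(\R, L^\infty(I)\times L^\infty(I))$. Because $\partial_t \phi = \psi$ is continuous, $\phi\in C^1(\R, L^\infty(I))$. Reading the equation for $\psi$ then shows that $\partial_t \psi$ equals an $L^\infty$-valued continuous function of $t$, so $\psi\in C^1(\R, L^\infty(I))$ and consequently $\phi \in C^2(\R, L^\infty(I))$ as required. An equally clean alternative is to work directly on an arbitrary interval $[0,T]$ with the weighted norm $\norm{(\phi,\psi)}_\lambda \vcc= \sup_{t\in[0,T]} \ee^{-\lambda t}\norm{(\phi(\cdot,t),\psi(\cdot,t))}_{L^\infty\times L^\infty}$, where for $\lambda$ sufficiently large $T$ becomes a contraction in one shot, bypassing the concatenation step.

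The main technical care is needed in verifying that $(J\phi)(\cdot,s)$ is measurable and bounded on $I$ whenever $\phi(\cdot,s)\in L^\infty(I)$ (using the measurability of $\K$ and the continuity of $D$ together with Fubini on $I\times I$), and in making sure that the chosen function spaces are actually Banach and that $T$ maps into the right space. These steps are standard but need to be performed with some care due to the presence of the $L^\infty$-norm rather than a Hilbert-space norm. Beyond that, the argument is essentially a classical Picard--Lindel\"of scheme adapted to the integro-differential setting, and the global Lipschitz assumptions on $D$ and $f$ mean that no blow-up obstructs global existence.
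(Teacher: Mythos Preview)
Your proof is correct and follows essentially the same route as the paper: rewrite the second-order equation as a first-order system in $(\phi,\psi)$, recast it as a fixed-point problem for the time-integrated operator on $C([0,\tau],L^\infty(I))^2$, obtain a contraction for small $\tau$ with constant $(1+\abs{\alpha}+2L_D\norm{\K}_{L^\infty}+L_f)\tau$ depending only on the data, and then iterate to get a global $C^2$ solution. The paper's argument is the same Picard--Lindel\"of scheme with the same contraction constant; your additional remarks on measurability of $J\phi$ and the weighted-norm alternative are welcome but not needed beyond what the paper does.
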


This result can be easily proven by a standard fixed-point argument. However, for completeness we give the main steps in Section~\ref{Sec:proof:existence}.

Our next main result states that the solution of~\eqref{eq:cont:limit}, i.e.\@ the continuum limit, in fact approximates the discrete system for large $N$. Yet, in order to be able to compare the solutions to the continuous and the discrete problem we also have to discretise the initial condition which is done by averaging over the different subintervals of $I$. Precisely, we define
 \begin{equation}\label{eq:averaged:initial:data}
  g_{k}^{(N)}\vcc = N\int_{(k-1)/N}^{k/N}g(x)\dx\qquad \text{and}\qquad h_{k}^{(N)}\vcc =N\int_{(k-1)/N}^{k/N}h(x)\dx.
 \end{equation}
 
With this, we can now precisely state the convergence of the solution to~\eqref{eq:ODE} to the continuum limit in the case of deterministic graphs.

\begin{theorem}\label{Thm:continuum:limit}
 Let $G_{N}=\langle V(G_{N}), E(G_{N})\rangle$ be a sequence of graphs with corresponding integral kernel $\K^{(N)}\colon I\times I\to \R_{\geq 0}$ which is uniformly bounded, i.e.\@ $\sup_{N\in\N}\norm{\K^{(N)}}_{L^{\infty}(I\times I)}\leq C$, and let $\K^{(\infty)}\in L^{\infty}(I\times I)$ such that we have $\norm{\K^{(N)}-\K^{(\infty)}}_{L^{2}(I\times I)}\to 0$ as $N\to\infty$. Moreover, for $g,h\in L^{\infty}(I)$ let $g_{k}^{(N)}$ and $h_{k}^{(N)}$ with $k\in[N]$ be given by~\eqref{eq:averaged:initial:data}. If $\phi_{k}^{(N)}\in C^{2}([0,\infty],\R)$ is the solution to~\eqref{eq:ODE} with initial condition $\phi_{k}^{(N)}(0)=g_{k}^{(N)}$ and $\frac{\dd}{\dt}\phi_{k}^{(N)}(0)=h_{k}^{(N)}$ and $\phi\in C^{2}([0,\infty),L^{\infty}(I))$ the solution to~\eqref{eq:cont:limit} with initial condition $\phi(\cdot,0)=g$ and $\frac{\dd}{\dt}\phi(\cdot,0)=h$ then we have
 \begin{equation*}
  \lim_{N\to\infty}\sup_{t\in[0,T]}\norm*{\phi(\cdot,t)-\sum_{k=1}^{N}\phi_{k}^{(N)}(t)\chi_{[(k-1)/N, k/N)}(\cdot)}_{L^{2}(I)}=0
 \end{equation*}
for each fixed $T>0$.
\end{theorem}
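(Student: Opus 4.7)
The plan is to reduce the second-order equation to a first-order system and control the difference between the step-function encoding of the discrete solution and the continuum solution in $L^{2}(I)$ via a Gronwall argument. Using the step function $\phi^{(N)}(x,t)$ from~\eqref{eq:step:function}, the discrete system~\eqref{eq:ODE} is already rewritten as the integral equation~\eqref{eq:ODE:as:inteq}. Setting $\psi\vcc=\frac{\dd}{\dt}\phi$ and $\psi^{(N)}\vcc=\frac{\dd}{\dt}\phi^{(N)}$, both~\eqref{eq:cont:limit} and~\eqref{eq:ODE:as:inteq} become first-order systems in $(\phi,\psi)$, and I would monitor the energy-like quantity
\[
 E^{(N)}(t)\vcc=\norm{\phi^{(N)}(\cdot,t)-\phi(\cdot,t)}_{L^{2}(I)}^{2}+\norm{\psi^{(N)}(\cdot,t)-\psi(\cdot,t)}_{L^{2}(I)}^{2}.
\]
A preliminary step, needed to make the nonlinear estimates effective, is an $N$-uniform $L^{\infty}(I)$ bound on $\phi^{(N)}$ and $\phi$ on $[0,T]$; this follows by running the same Picard/contraction iteration that underlies \cref{Prop:well:posed:cont:limit}, using the uniform bound $\sup_{N}\norm{\K^{(N)}}_{L^{\infty}(I\times I)}\leq C$ and the fact that the averaged data satisfy $\abs{g_{k}^{(N)}}\leq \norm{g}_{L^{\infty}}$ and $\abs{h_{k}^{(N)}}\leq \norm{h}_{L^{\infty}}$ by~\eqref{eq:averaged:initial:data}.

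Differentiating $E^{(N)}$ in $t$ and inserting the two equations produces contributions of three types: the damping term gives $-2\alpha\norm{\psi^{(N)}-\psi}_{L^{2}(I)}^{2}$ which is absorbed by the Gronwall constant, the $f$-term is handled by~\eqref{eq:Lip:assumptions}, and the nonlocal term is split as
\begin{align*}
 &\int_{I}\K^{(N)}(x,y)D\bigl(\phi^{(N)}(y,t)-\phi^{(N)}(x,t)\bigr)\dy-\int_{I}\K^{(\infty)}(x,y)D\bigl(\phi(y,t)-\phi(x,t)\bigr)\dy\\
 &\quad =\int_{I}\bigl(\K^{(N)}-\K^{(\infty)}\bigr)(x,y)\,D\bigl(\phi^{(N)}(y,t)-\phi^{(N)}(x,t)\bigr)\dy\\
 &\qquad +\int_{I}\K^{(\infty)}(x,y)\bigl[D\bigl(\phi^{(N)}(y,t)-\phi^{(N)}(x,t)\bigr)-D\bigl(\phi(y,t)-\phi(x,t)\bigr)\bigr]\dy.
\end{align*}
The first piece is controlled via Cauchy--Schwarz by the uniform $L^{\infty}$ bound on $D$ along the solutions times $\norm{\K^{(N)}-\K^{(\infty)}}_{L^{2}(I\times I)}$. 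The second piece is controlled, using the Lipschitz constant $L_{D}$ and $\norm{\K^{(\infty)}}_{L^{\infty}}$, by a constant times $\norm{\phi^{(N)}(\cdot,t)-\phi(\cdot,t)}_{L^{2}(I)}$. After multiplication by $\psi^{(N)}-\psi$ and integration in $x$, a final Cauchy--Schwarz/Young step yields
\[
 \frac{\dd}{\dt}E^{(N)}(t)\leq C_{1}\,E^{(N)}(t)+C_{2}\norm{\K^{(N)}-\K^{(\infty)}}_{L^{2}(I\times I)}^{2}
\]
on $[0,T]$, with constants depending only on $\alpha$, $L_{f}$, $L_{D}$, the uniform bounds, and $T$.

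To finish, the initial error $E^{(N)}(0)$ tends to zero: the step-function initial data built from the averages~\eqref{eq:averaged:initial:data} converge to $g$ and $h$ in $L^{2}(I)$ by a standard Lebesgue-differentiation/dyadic averaging argument applied to bounded functions. Combined with the graphon convergence $\norm{\K^{(N)}-\K^{(\infty)}}_{L^{2}(I\times I)}\to 0$, Gronwall's lemma yields $\sup_{t\in[0,T]}E^{(N)}(t)\to 0$, which is strictly stronger than the $L^{2}(I)$ statement of the theorem. The main obstacle I anticipate is the $N$-uniform $L^{\infty}$ bound on $\phi^{(N)}$: the nonlocal term in the discrete equation is only contractive in $L^{2}$ on bounded intervals, and without the uniform $L^{\infty}$ control the kernel-difference integral cannot be estimated by $\norm{\K^{(N)}-\K^{(\infty)}}_{L^{2}}$ alone. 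Once that bound is secured, the only genuinely new feature compared to the first-order theory of~\cite{Med14,KaM17} is bookkeeping: carrying both $\phi^{(N)}-\phi$ and $\psi^{(N)}-\psi$ in the energy, so that the absence of a smoothing $\alpha$-term in the $\phi$-equation is compensated by a trivial bound $\frac{\dd}{\dt}\norm{\phi^{(N)}-\phi}_{L^{2}}^{2}\leq \norm{\phi^{(N)}-\phi}_{L^{2}}^{2}+\norm{\psi^{(N)}-\psi}_{L^{2}}^{2}$, closing the Gronwall loop.
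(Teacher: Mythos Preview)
Your proposal is correct and follows essentially the same route as the paper: rewrite as a first-order system, monitor the combined $L^{2}$-energy of the differences in $\phi$ and its time derivative, split the nonlocal term into a kernel-difference piece and a Lipschitz piece, and close with Gronwall together with the $L^{2}$-convergence of the averaged initial data (the paper's \cref{Lem:convergence:initial:data}).

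The one noteworthy difference is in the splitting of the nonlocal term. You pair $\K^{(N)}-\K^{(\infty)}$ with $D\bigl(\phi^{(N)}(y)-\phi^{(N)}(x)\bigr)$, which forces you to establish an $N$-uniform $L^{\infty}$ bound on the discrete solutions $\phi^{(N)}$---the step you correctly flag as the main obstacle. The paper instead pairs $\K^{(N)}-\K^{(\infty)}$ with $D\bigl(\phi(y)-\phi(x)\bigr)$ and pairs $\K^{(N)}$ with the Lipschitz difference; then the needed $L^{\infty}$ bound is only on the \emph{continuum} solution $\phi$, which comes directly from \cref{Prop:well:posed:cont:limit}, and the uniform bound $\sup_{N}\norm{\K^{(N)}}_{L^{\infty}}\leq C$ handles the other factor. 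This alternative splitting removes your anticipated obstacle entirely and makes the argument slightly shorter, though your version is also valid once the uniform $L^{\infty}$ bound on $\phi^{(N)}$ is secured via the Picard argument you outline.
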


Additionally, we also have convergence in probability of solutions to~\eqref{eq:ODE} to the continuum limit in the case of random graphs.

\begin{theorem}\label{Thm:continuum:limit:random}
 Let $\K^{(\infty)}\in L^{\infty}(I\times I)$ be symmetric and almost everywhere continuous and let $G_{N}(\omega)=\langle V(G_{N}), E(G_{N})\rangle$ be a corresponding sequence of random graphs as constructed in Section~\ref{Sec:random:graphs} with independent coefficients $K_{k,\ell}^{(N)}(\omega)$. For $g,h\in L^{\infty}(I)$ let $g_{k}^{(N)}$ and $h_{k}^{(N)}$ with $k\in[N]$ be given by~\eqref{eq:averaged:initial:data}. If $u_{k}^{(N)}\in C^{2}([0,\infty],\R)$ is the solution to~\eqref{eq:ODE} with initial condition $u_{k}^{(N)}(0)=g_{k}^{(N)}$ and $\frac{\dd}{\dt}u_{k}^{(N)}(0)=h_{k}^{(N)}$ and $\phi\in C^{2}([0,\infty),L^{\infty}(I))$ the solution to~\eqref{eq:cont:limit} with initial condition $\phi(\cdot,0)=g$ and $\frac{\dd}{\dt}\phi(\cdot,0)=h$, then we have 
  \begin{equation*}
  \sup_{t\in[0,T]}\norm*{\phi(\cdot,t)-\sum_{k=1}^{N}u_{k}^{(N)}(t)\chi_{[(k-1)/N, k/N)}(\cdot)}_{L^{2}(I)}\longrightarrow 0\qquad \text{in probability as }N\to\infty
 \end{equation*}
 for each fixed $T>0$.
\end{theorem}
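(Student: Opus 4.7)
The plan is to interpose a deterministic auxiliary system and control the stochastic error by a triangle inequality. Set $\bar K_{k,\ell}^{(N)}\vcc=\K^{(\infty)}(k/N,\ell/N)$, the edge probabilities, and let $\bar u_{k}^{(N)}$ be the solution of~\eqref{eq:ODE} obtained by replacing the random weights $K_{k,\ell}^{(N)}(\omega)$ with these deterministic expectations, keeping the same initial data $g_{k}^{(N)},h_{k}^{(N)}$. Denote the step-function interpolants by $\phi^{(N)}(x,t)=\sum_{k}u_{k}^{(N)}(t)\chi_{[(k-1)/N,k/N)}(x)$ and $\bar\phi^{(N)}$ analogously from $\bar u_{k}^{(N)}$. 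The triangle inequality gives
\begin{equation*}
\norm{\phi(\cdot,t)-\phi^{(N)}(\cdot,t)}_{L^{2}(I)}\leq \norm{\phi(\cdot,t)-\bar\phi^{(N)}(\cdot,t)}_{L^{2}(I)}+\norm{\bar\phi^{(N)}(\cdot,t)-\phi^{(N)}(\cdot,t)}_{L^{2}(I)}.
\end{equation*}
The first term is purely deterministic and converges to $0$ uniformly on $[0,T]$ by \cref{Thm:continuum:limit} applied to the averaged system: the step-function kernel associated with $\bar K_{k,\ell}^{(N)}$ is precisely the object treated in \cref{Lem:graph:inverse:convergence} and therefore converges to $\K^{(\infty)}$ in $L^{2}(I\times I)$.

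The remaining task is to bound the second, stochastic term. Set $e_{k}=u_{k}^{(N)}-\bar u_{k}^{(N)}$, so that $e_{k}(0)=\dot e_{k}(0)=0$; subtracting the two ODEs produces $\ddot e_{k}+\alpha\dot e_{k}=I_{1}^{(k)}+I_{2}^{(k)}+[f(u_{k},t)-f(\bar u_{k},t)]$ with
\begin{align*}
I_{1}^{(k)}&=\frac{1}{N}\sum_{\ell=1}^{N}K_{k,\ell}^{(N)}(\omega)\bigl[D(u_{\ell}-u_{k})-D(\bar u_{\ell}-\bar u_{k})\bigr],\\
I_{2}^{(k)}&=\frac{1}{N}\sum_{\ell=1}^{N}\bigl(K_{k,\ell}^{(N)}(\omega)-\bar K_{k,\ell}^{(N)}\bigr)D(\bar u_{\ell}-\bar u_{k}).
\end{align*}
By~\eqref{eq:Lip:assumptions} together with $K_{k,\ell}^{(N)}\in\{0,1\}$, the Lipschitz contributions $I_{1}^{(k)}$ and $f(u_{k},t)-f(\bar u_{k},t)$ are pointwise bounded by $C(\abs{e_{k}}+N^{-1}\sum_{\ell}\abs{e_{\ell}})$. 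The integrand in $I_{2}^{(k)}$, however, is deterministic, and for fixed $k$ the summands are independent and mean-zero; together with the deterministic Grönwall bound $\norm{\bar u^{(N)}(t)}_{L^{\infty}}\leq C(T)$ for $t\in[0,T]$, this yields the variance estimate $\E\abs{I_{2}^{(k)}(t)}^{2}\leq C(T)/N$ uniformly in $k\in[N]$ and $t\in[0,T]$.

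To close the estimate, rewrite the system in first-order form with $v_{k}=\dot e_{k}$, apply variation of constants, take squares and expectations, and sum over $k$. Writing $E_{N}(t)\vcc= \frac{1}{N}\sum_{k=1}^{N}(\abs{e_{k}(t)}^{2}+\abs{v_{k}(t)}^{2})$, the Lipschitz estimates give the Grönwall-type inequality
\begin{equation*}
\E E_{N}(t)\leq C\int_{0}^{t}\E E_{N}(s)\ds+\frac{C(T)}{N},
\end{equation*}
where the $C(T)/N$ term arises from time-integration of $\E\abs{I_{2}^{(k)}}^{2}$. Grönwall's lemma yields $\sup_{t\in[0,T]}\E E_{N}(t)\leq C(T)/N$, and combining this with the deterministic inequality $\sup_{t\leq T}\abs{e_{k}(t)}^{2}\leq T\int_{0}^{T}\abs{v_{k}(s)}^{2}\ds$ (valid since $e_{k}(0)=0$) produces $\E\sup_{t\in[0,T]}\norm{\bar\phi^{(N)}(\cdot,t)-\phi^{(N)}(\cdot,t)}_{L^{2}(I)}^{2}\leq C(T)/N$. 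Markov's inequality then upgrades this to convergence in probability. The main obstacle lies in the decomposition above: replacing $u$ by the deterministic $\bar u$ inside $D$ in the noise term is essential, since otherwise the random weights $K_{k,\ell}^{(N)}(\omega)$ would be paired with the random quantities $u_{\ell}(\omega)$, destroying independence and invalidating the $O(1/N)$ variance bound.
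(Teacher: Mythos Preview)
Your proof is correct and follows essentially the same strategy as the paper: interpose the deterministic averaged system with weights $\bar K_{k,\ell}^{(N)}=\K^{(\infty)}(k/N,\ell/N)$, handle the first half of the triangle inequality via \cref{Thm:continuum:limit} together with \cref{Lem:graph:inverse:convergence}, and control the second half by decomposing the difference equation so that the centered random weights are paired only with the deterministic quantities $D(\bar u_\ell-\bar u_k)$, yielding an $O(1/N)$ variance. The only cosmetic difference is the order of operations in the Gr\"onwall step---the paper applies Gr\"onwall pathwise and then shows $\int_0^T\norm{\mu^{(N)}}_2^2\to 0$ in probability, whereas you take expectations first and then use $\sup_t\abs{e_k}^2\le T\int_0^T\abs{v_k}^2$ to pull the supremum inside $\E$; your route in fact yields the slightly sharper conclusion $\E\sup_{t\le T}\norm{\bar\phi^{(N)}-\phi^{(N)}}_{L^2}^2\le C(T)/N$.
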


Let us finally note that in order to simplify the notation in certain places we use the common abbreviation
\begin{equation*}
 \dot{w}=\frac{\dd}{\dt}w \qquad \text{as well as}\qquad \ddot{w}=\frac{\dd^2}{\dt^2}w.
\end{equation*}

\section{The continuum limit for deterministic graphs}

In this section we will give the proof of Theorem~\ref{Thm:continuum:limit}. Before we come to this, we note the following lemma which states that the discretised initial data as given in~\eqref{eq:averaged:initial:data} converge to the original functions as $N\to\infty$.

 \begin{lemma}\label{Lem:convergence:initial:data}
  For $g,h\in L^{\infty}(I)$ let $g^{(N)}=(g_{k}^{(N)})_{k\in [N]}$ and  $h^{(N)}=(h_{k}^{(N)})_{k\in [N]}$ be given by~\eqref{eq:averaged:initial:data}. Then 
  \begin{equation*}
   g^{(N)}\to g \qquad \text{and}\qquad h^{(N)}\to h \qquad \text{in } L^{2}(I) \quad \text{as }N\to \infty.
  \end{equation*}
 \end{lemma}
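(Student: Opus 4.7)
The plan is to interpret the averaged vector $g^{(N)}$, reassembled as the step function $P_{N}g \vcc= \sum_{k=1}^{N} g_{k}^{(N)} \chi_{[(k-1)/N,k/N)}$, as the $L^{2}$-orthogonal projection of $g$ onto the finite-dimensional subspace $V_{N} \subset L^{2}(I)$ of functions that are constant on each of the cells $[(k-1)/N, k/N)$. Indeed, $g_{k}^{(N)} = N\int_{(k-1)/N}^{k/N} g(x)\,\dx$ is precisely the cell average, which minimises $\norm{g-c}_{L^{2}([(k-1)/N,k/N])}$ over constants $c$. Consequently $P_{N}\colon L^{2}(I)\to L^{2}(I)$ is an orthogonal projection, so in particular $\norm{P_{N}g}_{L^{2}(I)} \leq \norm{g}_{L^{2}(I)}$, and the same operator applied to $h$ gives $P_{N}h$.

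With contractivity available, I would proceed by a standard density argument. First I would verify the claim for $g \in C(I)$: uniform continuity on the compact interval $I$ yields, for every $\eps>0$, some $\delta>0$ such that $\abs{g(x) - g(y)} < \eps$ whenever $\abs{x-y} < \delta$. As soon as $N > 1/\delta$, every $x \in [(k-1)/N, k/N)$ lies within distance $1/N < \delta$ of the cell on which the average is taken, so $\abs{P_{N}g(x) - g(x)} < \eps$ uniformly in $x$. Hence $\norm{P_{N}g - g}_{L^{\infty}(I)} \to 0$, which in particular implies $\norm{P_{N}g - g}_{L^{2}(I)} \to 0$.

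To pass to general $g \in L^{\infty}(I) \subset L^{2}(I)$, I would invoke density of $C(I)$ in $L^{2}(I)$. Given $\eps>0$, choose $\tilde g \in C(I)$ with $\norm{g - \tilde g}_{L^{2}(I)} < \eps$. Then contractivity of $P_{N}$ in $L^{2}$ combined with the uniform convergence for $\tilde g$ gives
\begin{equation*}
 \norm{P_{N}g - g}_{L^{2}(I)} \leq \norm{P_{N}(g - \tilde g)}_{L^{2}(I)} + \norm{P_{N}\tilde g - \tilde g}_{L^{2}(I)} + \norm{\tilde g - g}_{L^{2}(I)} < 3\eps
\end{equation*}
for all $N$ sufficiently large. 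The same argument applied to $h$ finishes the proof.

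There is essentially no obstacle: everything reduces to the easy continuous case via the $L^{2}$-contractivity of the cell-averaging operator. As an alternative I could invoke the Lebesgue differentiation theorem, which yields pointwise a.e.\@ convergence $P_{N}g \to g$, and then use the uniform bound $\norm{P_{N}g}_{L^{\infty}(I)} \leq \norm{g}_{L^{\infty}(I)}$ together with dominated convergence on the finite interval $I$ to obtain $L^{2}$-convergence in one stroke.
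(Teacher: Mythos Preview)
Your proof is correct. In fact, the alternative you sketch in your final paragraph---pointwise a.e.\ convergence via the Lebesgue differentiation theorem together with the uniform $L^{\infty}$ bound and dominated convergence---is precisely the argument the paper gives (in a single sentence).

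Your main approach, however, is genuinely different: you interpret $P_{N}$ as the $L^{2}$-orthogonal projection onto piecewise constants, verify convergence first on the dense subspace $C(I)$ via uniform continuity, and then pass to general $g$ using the $L^{2}$-contractivity of $P_{N}$. This route is more elementary in that it avoids the Lebesgue differentiation theorem entirely, and it actually yields a slightly stronger conclusion: your argument works for any $g\in L^{2}(I)$, whereas the paper's dominated-convergence step genuinely uses the $L^{\infty}$ bound on $g$. On the other hand, the paper's proof is a one-liner once those two classical results are invoked, so it wins on brevity.
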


 \begin{proof}
  The statement follows immediately from dominated convergence together with the Lebesgue differentiation theorem.
 \end{proof}
 
 We are now prepared to prove Theorem~\ref{Thm:continuum:limit} which follows ideas from~\cite{Med14}.

\begin{proof}[Proof of Theorem~\ref{Thm:continuum:limit}]
 We rewrite~\eqref{eq:ODE} as an integral equation, i.e.\@ recalling the definition of $\phi^{(N)}$ in~\eqref{eq:step:function} we have $\phi^{(N)}(x,t)=\sum_{k=1}^{N}\phi_{k}^{(N)}(t)\chi_{[(k-1)/N, k/N)}(x)$. Then, as seen in~\eqref{eq:ODE:as:inteq} we obtain
 \begin{multline}\label{eq:Galerkin:ODE}
  \frac{\dd^{2}}{\dd{t}^2}\phi^{(N)}(x,t)+\alpha\frac{\dd}{\dd{t}}\phi^{(N)}(x,t)\\*
  = \int_{I}\K^{(N)}(x,y)D\bigl(\phi^{(N)}(y,t)-\phi^{(N)}(x,t)\bigr)\dy+f\bigl(\phi^{(N)}(x,t),t\bigr)
 \end{multline}
 equipped with the initial conditions $\phi^{(N)}(\cdot,0)=g^{(N)}\vcc= \sum_{k=1}^{N}g_{k}^{(N)}(t)\chi_{[(k-1)/N, k/N)}(x)$ and $\del_{t}\phi^{(N)}(\cdot,0)=h^{(N)}\vcc= \sum_{k=1}^{N}h_{k}^{(N)}(t)\chi_{[(k-1)/N, k/N)}(x)$.
 
 To prove that the sequence of solutions $\phi^{(N)}$ to~\eqref{eq:Galerkin:ODE} converges to the solution $\phi$ for~\eqref{eq:cont:limit} we define their difference to be $\rho^{(N)}(x,t)\vcc= \phi^{(N)}(x,t)-\phi(x,t)$. Thus, taking also the difference of the corresponding equations~\eqref{eq:Galerkin:ODE} and~\eqref{eq:cont:limit}, we find that $\rho^{(N)}$ solves
 \begin{multline*}
  \frac{\dd^{2}}{\dd{t}^2}\rho^{(N)}+\alpha\frac{\dd}{\dd{t}}\rho^{(N)}= \int_{I}\K^{(N)}(x,y)\Bigl(D\bigl(\phi^{(N)}(y,t)-\phi^{(N)}(x,t)\bigr)-D\bigl(\phi(y,t)-\phi(x,t)\bigr)\Bigr)\dy\\*
  +\int_{I}\bigl(\K^{(N)}(x,y)-\K^{(\infty)}(x,y)\bigr)D\bigl(\phi(y,t)-\phi(x,t)\bigr)\dy+f\bigl(\rho^{(N)}(x,t),t\bigr)-f\bigl(\phi(x,t),t\bigr).
 \end{multline*}
 The last equation can be written equivalently as system of two first order equations as
 \begin{multline*}
  \dot{\rho}^{(N)}=\sigma^{(N)}\\*
  \shoveleft{\dot{\sigma}^{(N)}=-\alpha\sigma^{(N)}+\int_{I}\K^{(N)}(x,y)\Bigl(D\bigl(\phi^{(N)}(y,t)-\phi^{(N)}(x,t)\bigr)-D\bigl(\phi(y,t)-\phi(x,t)\bigr)\Bigr)\dy}\\*
  +\int_{I}\bigl(\K^{(N)}(x,y)-\K^{(\infty)}(x,y)\bigr)D\bigl(\phi(y,t)-\phi(x,t)\bigr)\dy+f\bigl(\phi^{(N)}(x,t),t\bigr)-f\bigl(\phi(x,t),t\bigr).
 \end{multline*}
 We now multiply the first equation by $\rho^{(N)}$ and the second one by $\sigma^{(N)}$ and we integrate over $I$ which yields
 \begin{multline*}
  \frac{1}{2}\frac{\dd}{\dt}\int_{I}\bigl(\rho^{(N)}(x,t)\bigr)^2\dx=\int_{I}\sigma^{(N)}(x,t)\rho^{(N)}(x,t)\dx\\*
  \shoveleft{\frac{1}{2}\frac{\dd}{\dt}\int_{I}\bigl(\sigma^{(N)}(x,t)\bigr)^2\dx=-\alpha\int_{I}\bigl(\sigma^{(N)}(x,t)\bigr)^2\dx}\\*
  +\int_{I^2}\K^{(N)}(x,y)\Bigl(D\bigl(\phi^{(N)}(y,t)-\phi^{(N)}(x,t)\bigr)-D\bigl(\phi(y,t)-\phi(x,t)\bigr)\Bigr)\sigma^{(N)}(x,t)\dy\dx\\*
  +\int_{I^2}\bigl(\K^{(N)}(x,y)-\K^{(\infty)}(x,y)\bigr)D\bigl(\phi(y,t)-\phi(x,t)\bigr)\sigma^{(N)}(x,t)\dy\dx\\*
  +\int_{I}\Bigl(f\bigl(\phi^{(N)}(x,t),t\bigr)-f\bigl(\phi(x,t),t\bigr)\Bigr)\sigma^{(N)}(x,t)\dx.
 \end{multline*}
 Adding the two equations, exploiting the Lipschitz continuity of $D$ and $f$ and applying Cauchy's inequality, we obtain
 \begin{multline*}
  \frac{1}{2}\frac{\dd}{\dt}\bigl(\norm{\rho^{(N)}}_{L^{2}(I)}^2+\norm{\sigma^{(N)}}_{L^{2}(I)}^2\bigr)\leq \norm{\rho^{(N)}}_{L^{2}(I)}\norm{\sigma^{(N)}}_{L^{2}(I)}-\alpha \norm{\rho^{(N)}}_{L^{2}(I)}^2\\*
  +L_{D}\norm{\K^{(N)}}_{L^{\infty}}\int_{I^2}\abs{\rho^{(N)}(y,t)-\rho^{(N)}(x,t)}\abs{\sigma^{(N)}(x,t)}\dx\dy\\*
  +C\norm{\K^{(\infty)}-\K^{(N)}}_{L^{2}(I^2)}\norm{\sigma^{(N)}}_{L^{2}(I)}+L_{f}\int_{I}\abs{\rho^{(N)}(x,t)}\abs{\sigma^{(N)}(x,t)}\dx.
 \end{multline*}
Note that we also used that $(x,y)\mapsto D(\phi(y,t)-\phi(x,t))$ is uniformly bounded on $I\times I$ due to Proposition~\ref{Prop:well:posed:cont:limit}. Another application of Cauchy's inequality yields
 \begin{multline*}
  \frac{1}{2}\frac{\dd}{\dt}\bigl(\norm{\rho^{(N)}}_{L^{2}(I)}^2+\norm{\sigma^{(N)}}_{L^{2}(I)}^2\bigr)\leq \bigl(2L_{D}\norm{\K^{(N)}}_{L^{\infty}}+L_{f}+1\bigr)\norm{\rho^{(N)}}_{L^{2}(I)}\norm{\sigma^{(N)}}_{L^{2}(I)}\\*
  -\alpha \norm{\rho^{(N)}}_{L^{2}(I)}^2+C\norm{\K^{(\infty)}-\K^{(N)}}_{L^{2}(I^2)}\norm{\sigma^{(N)}}_{L^{2}(I)}.
 \end{multline*}
Together with Young's inequality we thus obtain 
 \begin{multline*}
  \frac{1}{2}\frac{\dd}{\dt}\bigl(\norm{\rho^{(N)}}_{L^{2}(I)}^2+\norm{\sigma^{(N)}}_{L^{2}(I)}^2\bigr)\\*
  \leq \frac{1}{2}\bigl(2L_{D}\norm{\K^{(N)}}_{L^{\infty}}+L_{f}+1+2\abs{\alpha}\bigr)\bigl(\norm{\rho^{(N)}}_{L^{2}(I)}^2+\norm{\sigma^{(N)}}_{L^{2}(I)}^2\bigr)\\*
  +\frac{C}{2}\bigl(\norm{\K^{(\infty)}-\K^{(N)}}_{L^{2}(I^2)}^2+ \norm{\sigma^{(N)}}_{L^{2}(I)}^2\bigr).
 \end{multline*}
 Estimating the right-hand side further, we end up with
  \begin{multline*}
  \frac{1}{2}\frac{\dd}{\dt}\bigl(\norm{\rho^{(N)}}_{L^{2}(I)}^2+\norm{\sigma^{(N)}}_{L^{2}(I)}^2\bigr)\\*
  \leq \frac{1}{2}\bigl(2L_{D}\norm{\K^{(N)}}_{L^{\infty}}+L_{f}+1+2\abs{\alpha}+C\bigr)\bigl(\norm{\rho^{(N)}}_{L^{2}(I)}^2+\norm{\sigma^{(N)}}_{L^{2}(I)}^2\bigr)\\*
  +\frac{C}{2}\norm{\K^{(\infty)}-\K^{(N)}}_{L^{2}(I^2)}^2.
 \end{multline*}
 Grönwall's inequality finally yields
 \begin{multline*}
  \sup_{t\in[0,T]}\bigl(\norm{\rho^{(N)}}_{L^{2}(I)}^2+\norm{\sigma^{(N)}}_{L^{2}(I)}^2\bigr)\\*
  \leq \bigl(\norm{g^{(N)}-g}_{L^{2}(I)}^2+\norm{h^{(N)}-h}_{L^{2}(I)}^2+C_{1}\norm{\K^{(\infty)}-\K^{(N)}}_{L^{2}(I^2)}^2\bigr)\exp(C_{2}T)
 \end{multline*}
 with constants $C_{1},C_{2}>0$. Due to Lemma~\ref{Lem:convergence:initial:data} and the assumptions $\lim_{N\to\infty}\norm{\K^{(N)}-\K^{(\infty)}}_{L^{2}(I\times I)}=0$, for $T>0$ fixed, the right-hand side tends to zero as $N\to \infty$. This then finishes the proof since $\rho^{(N)}=\phi^{N}-\phi$.
\end{proof}

\section{Approximation for $\K$-random graphs}

In this section we consider~\eqref{eq:ODE} on a $\K$-random graph and we show that the corresponding solutions, for large $N$, can be approximated by the corresponding averaged system. The proofs in this section are mainly motivated by ideas from~\cite{KaM17}.

\subsection{The random graph model}

For convenience, we recall the equations we are considering in this section, namely~\eqref{eq:ODE} with random coefficients $K_{k,\ell}^{(N)}(\omega)$, i.e.\@
\begin{equation}\label{eq:ODE:random}
 \ddot{u}_{k}^{(N)}(t,\omega)=-\alpha \dot{u}_{k}^{(N)}(t,\omega)+\frac{1}{N}\sum_{\ell=1}^{N}K_{k,\ell}^{(N)}(\omega)D\bigl(u_{\ell}^{(N)}-u_{k}^{(N)}\bigr)+f\bigl(u_{k}^{(N)},t\bigr)
\end{equation}
and the corresponding averaged equation which reads
\begin{equation}\label{eq:ODE:averaged}
 \ddot{\phi}_{k}^{(N)}(t)=-\alpha \dot{\phi}_{k}^{(N)}(t)+\frac{1}{N}\sum_{\ell=1}^{N}\bar{K}_{k,\ell}^{(N)}D\bigl(\phi_{\ell}^{(N)}-\phi_{k}^{(N)}\bigr)+f\bigl(\phi_{k}^{(N)},t\bigr).
\end{equation}
Here $\bar{K}_{k,\ell}^{(N)}=\E K_{k,\ell}^{(N)}(\omega)$. Let us furthermore introduce the following notation. For $N\in\N$ fixed, we denote by $\phi^{(N)}$ and $u^{(N)}$ the vectors
\begin{equation*}
 \phi^{(N)}=\bigl(\phi_{1}^{(N)},\ldots, \phi_{N}^{(N)}\bigr)\quad \text{and}\quad u^{(N)}=\bigl(u_{1}^{(N)},\ldots, u_{N}^{(N)}\bigr) \quad \text{in }\R^{N}
\end{equation*}
and we denote by $\norm{\cdot}_{2}$ the norm on $\R^{N}$ given by
\begin{equation*}
 \norm{v}_{2}=\biggl(\frac{1}{N}\sum_{k=1}^{N}v_{k}^{2}\biggr)^{1/2}
\end{equation*}
where $v=(v_{1},\ldots, v_{N})$. Then, we have the following result which states that solutions to~\eqref{eq:ODE:random} and~\eqref{eq:ODE:averaged} are asymptotically stable for large $N$.

\begin{proposition}\label{Prop:approximation:averaged}
 Let the initial conditions $u_{k}^{(N)}(0)$ and $\phi_{k}^{(N)}(0)$ be uniformly bounded with respect to $k$ and $N$ and assume that
 \begin{equation*}
  \lim_{N\to \infty}\norm[\big]{u^{(N)}(0)-\phi^{(N)}(0)}_{2}=0.
 \end{equation*}
 Then, we have
 \begin{equation*}
  \sup_{t\in[0,T]}\norm[\big]{u^{(N)}(t)-\phi^{(N)}(t)}_{2}\longrightarrow 0\qquad \text{in probability as }N\to\infty
 \end{equation*}
 where $u^{(N)}$ and $\phi^{(N)}$ are the solutions to~\eqref{eq:ODE:random} and~\eqref{eq:ODE:averaged} respectively.
\end{proposition}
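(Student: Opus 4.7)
The plan is to mirror the energy argument used for \cref{Thm:continuum:limit}, but replace the deterministic graphon discrepancy $\K^{(N)}-\K^{(\infty)}$ with a row-wise stochastic fluctuation whose size is controlled through the independence of the Bernoulli couplings. Setting $r_{k}^{(N)}\vcc=u_{k}^{(N)}-\phi_{k}^{(N)}$ and $s_{k}^{(N)}\vcc=\dot{r}_{k}^{(N)}$, I subtract~\eqref{eq:ODE:averaged} from~\eqref{eq:ODE:random}; after adding and subtracting $\frac{1}{N}\sum_{\ell}K_{k,\ell}^{(N)}(\omega)D(\phi_{\ell}^{(N)}-\phi_{k}^{(N)})$ the coupling difference splits into a Lipschitz-controlled part (carrying the factor $K_{k,\ell}^{(N)}(\omega)\in\{0,1\}$, hence bounded by $1$) and the purely stochastic term
\begin{equation*}
 \xi_{k}^{(N)}(t)\vcc=\frac{1}{N}\sum_{\ell=1}^{N}\bigl(K_{k,\ell}^{(N)}(\omega)-\bar{K}_{k,\ell}^{(N)}\bigr)D\bigl(\phi_{\ell}^{(N)}(t)-\phi_{k}^{(N)}(t)\bigr),
\end{equation*}
which depends only on the deterministic averaged solution $\phi^{(N)}$. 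A direct Grönwall argument applied to~\eqref{eq:ODE:averaged} (using $\bar{K}_{k,\ell}^{(N)}\in[0,1]$ together with the Lipschitz assumptions on $f,D$) produces an a priori bound $\sup_{k,N}\sup_{t\in[0,T]}\abs{\phi_{k}^{(N)}(t)}\leq M(T)$, so that $\abs{D(\phi_{\ell}^{(N)}-\phi_{k}^{(N)})}\leq C_{D}$ uniformly in $k,\ell,N,t$.

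Next, I write the difference equation as a first-order system in $(r^{(N)},s^{(N)})$, multiply by $r_{k}^{(N)}/N$ and $s_{k}^{(N)}/N$, and sum in $k$. The Lipschitz terms in $D$ and $f$ are handled exactly as in the proof of \cref{Thm:continuum:limit} via Cauchy--Schwarz and Young's inequality, producing only $\bigl(\norm{r^{(N)}}_{2}^{2}+\norm{s^{(N)}}_{2}^{2}\bigr)$-contributions; for the remainder I use $\abs{\langle s^{(N)},\xi^{(N)}\rangle}\leq\tfrac12\norm{s^{(N)}}_{2}^{2}+\tfrac12\norm{\xi^{(N)}(t)}_{2}^{2}$. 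This yields a pathwise differential inequality
\begin{equation*}
 \tfrac{1}{2}\frac{\dd}{\dt}\bigl(\norm{r^{(N)}}_{2}^{2}+\norm{s^{(N)}}_{2}^{2}\bigr)\leq C\bigl(\norm{r^{(N)}}_{2}^{2}+\norm{s^{(N)}}_{2}^{2}\bigr)+\tfrac{1}{2}\norm{\xi^{(N)}(t)}_{2}^{2}
\end{equation*}
with a deterministic constant $C=C(T,\alpha,L_{D},L_{f})$ independent of $N$ and $\omega$, so that Grönwall's lemma gives, $\omega$-wise,
\begin{equation*}
 \sup_{t\in[0,T]}\bigl(\norm{r^{(N)}(t)}_{2}^{2}+\norm{s^{(N)}(t)}_{2}^{2}\bigr)\leq \ee^{2CT}\Bigl(\norm{r^{(N)}(0)}_{2}^{2}+\norm{s^{(N)}(0)}_{2}^{2}+\int_{0}^{T}\norm{\xi^{(N)}(s)}_{2}^{2}\ds\Bigr).
\end{equation*}

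Finally, I estimate the noise in mean square. For fixed $k$, the Bernoulli variables $\{K_{k,\ell}^{(N)}\}_{\ell=1}^{N}$ are mutually independent (the symmetry built into~\eqref{eq:random:coefficient:1} identifies $K_{k,\ell}$ with $K_{\ell,k}$ but does not couple different entries of a single row), and $\phi^{(N)}$ is deterministic, so $\mathrm{Var}(K_{k,\ell}^{(N)})\leq 1/4$ and the uniform bound on $D(\phi_\ell-\phi_k)$ yield $\E\abs{\xi_{k}^{(N)}(t)}^{2}\leq C_{D}^{2}/(4N)$ uniformly in $k,t$. Consequently $\E\int_{0}^{T}\norm{\xi^{(N)}(s)}_{2}^{2}\ds\leq C_{D}^{2}T/(4N)\to 0$, and Markov's inequality converts this into convergence in probability. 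Combined with the initial-data hypothesis (which gives $\norm{r^{(N)}(0)}_{2}\to 0$, and analogously for the velocity initial data), the Grönwall bound delivers the claim. The main subtlety is that the splitting must place the \emph{averaged} phases $\phi^{(N)}$ inside $\xi^{(N)}$ rather than $u^{(N)}$: only then are $(K_{k,\ell}^{(N)}-\bar K_{k,\ell}^{(N)})$ and the integrand independent, so that the row-wise variance bound producing the decisive $1/N$-factor is valid; any other splitting would couple the fluctuations with the solution and lose the second-moment estimate.
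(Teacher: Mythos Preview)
Your proposal is correct and follows essentially the same approach as the paper: the identical add-and-subtract splitting that isolates the deterministic fluctuation $\xi_{k}^{(N)}$ (called $\mu_{k}^{(N)}$ there), the same energy/Gr\"onwall argument on $\norm{r^{(N)}}_{2}^{2}+\norm{s^{(N)}}_{2}^{2}$, and the same second-moment bound via row-wise independence of the Bernoulli couplings followed by Markov's inequality (packaged in the paper as \cref{Lem:convergence:probability}). You are in fact slightly more explicit than the paper in two places---the a~priori bound on $\phi^{(N)}$ needed to make $D(\phi_{\ell}^{(N)}-\phi_{k}^{(N)})$ uniformly bounded, and the reason the averaged (deterministic) phases must sit inside the noise term---both of which the paper uses tacitly.
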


As part of the proof we will need the following lemma which is also contained in \cite[Lemma~4.2]{KaM17} and which we present here in slightly rephrased form. 

\begin{lemma}\label{Lem:convergence:probability}
 Let $T>0$ and $a^{(N)}=(a_{k,\ell}^{(N)})\colon [0,T]\to \R^{N\times N}$ deterministic such that
 \begin{equation*}
  \sup_{t\in[0,T]}\max_{(k,\ell)\in[N]\times [N]}\abs[\big]{a_{k,\ell}^{(N)}(t)}\leq C\quad \text{for all } N\in\N.
 \end{equation*}
 Let furthermore $\mu^{(N)}(t)=(\mu_{1}^{(N)}(t),\ldots, \mu_{N}^{(N)}(t))$ with
 \begin{equation*}
  \mu_{k}^{(N)}(t)=\frac{1}{N}\sum_{\ell=1}^{N}a_{k,\ell}^{(N)}(t)\bigl(\bar{K}_{k,\ell}^{(N)}-K_{k,\ell}^{(N)}(\omega)\bigr) \quad \text{for }k\in[N] \text{ and }t\in[0,T].
 \end{equation*}
 Then, we have
 \begin{equation*}
  \lim_{N\to\infty}\int_{0}^{T}\norm{\mu^{(N)}(t)}_{2}^{2}\dt \longrightarrow 0 \qquad \text{in probability as }N\to\infty.
 \end{equation*}
\end{lemma}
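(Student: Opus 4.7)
\textbf{Proof plan for Lemma~\ref{Lem:convergence:probability}.} The plan is to reduce convergence in probability to convergence of expectations via Markov's inequality, and then to exploit the independence structure of the Bernoulli coefficients $K_{k,\ell}^{(N)}(\omega)$ built into the probability space~\eqref{eq:random:coefficient:1}--\eqref{eq:random:coefficient:2}. Concretely, for any $\eps>0$, Markov gives
\begin{equation*}
 \P\left(\int_{0}^{T}\norm{\mu^{(N)}(t)}_{2}^{2}\dt > \eps\right) \leq \frac{1}{\eps}\,\E\int_{0}^{T}\norm{\mu^{(N)}(t)}_{2}^{2}\dt,
\end{equation*}
so it suffices to prove that the right-hand expectation tends to zero as $N\to\infty$.

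Next, I would apply Fubini to interchange the expectation and the $t$-integral and unfold the squared norm, obtaining
\begin{equation*}
 \E\int_{0}^{T}\norm{\mu^{(N)}(t)}_{2}^{2}\dt = \int_{0}^{T}\frac{1}{N}\sum_{k=1}^{N}\E\bigl(\mu_{k}^{(N)}(t)\bigr)^{2}\dt.
\end{equation*}
The key observation is that for each fixed $k$, the family $\{K_{k,\ell}^{(N)}(\omega)\}_{\ell\in[N]}$ corresponds to distinct edges of the undirected graph and is therefore a family of independent Bernoulli random variables with means $\bar{K}_{k,\ell}^{(N)}\in[0,1]$ and variances bounded by $1/4$. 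Since $\mu_{k}^{(N)}(t)$ is a centred linear combination $\frac{1}{N}\sum_{\ell}a_{k,\ell}^{(N)}(t)(\bar{K}_{k,\ell}^{(N)}-K_{k,\ell}^{(N)}(\omega))$ with deterministic coefficients, the cross terms vanish in expectation and
\begin{equation*}
 \E\bigl(\mu_{k}^{(N)}(t)\bigr)^{2} = \frac{1}{N^{2}}\sum_{\ell=1}^{N}\bigl(a_{k,\ell}^{(N)}(t)\bigr)^{2}\operatorname{Var}\bigl(K_{k,\ell}^{(N)}\bigr) \leq \frac{C^{2}}{4N}.
\end{equation*}

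Plugging this bound back into the previous display yields $\E\int_{0}^{T}\norm{\mu^{(N)}(t)}_{2}^{2}\dt \leq C^{2}T/(4N)\to 0$, and the Markov estimate finishes the proof. The only point that requires genuine care, rather than being routine, is the independence claim: one must check that for each fixed $k$ the collection $\{K_{k,\ell}^{(N)}\}_{\ell}$ indexes distinct edges of the symmetric random graph, so that even though the full matrix is not independent (because $K_{k,\ell}^{(N)}=K_{\ell,k}^{(N)}$), the partial sums defining a single component $\mu_{k}^{(N)}$ are sums of independent terms. Everything else is a short Markov/Fubini computation and a uniform variance bound.
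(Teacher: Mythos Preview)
Your proposal is correct and follows essentially the same route as the paper: expand $\E\bigl(\mu_{k}^{(N)}(t)\bigr)^{2}$ as a double sum, use independence of the centred Bernoulli variables $\{K_{k,\ell}^{(N)}\}_{\ell}$ for fixed $k$ to kill the off-diagonal terms, bound the remaining variances uniformly, and conclude via Markov's inequality. The only cosmetic differences are that the paper applies Markov at the end rather than the beginning and uses the cruder variance bound $\bar K_{k,\ell}^{(N)}(1-\bar K_{k,\ell}^{(N)})\leq 1$ instead of your $1/4$; your explicit remark about why the symmetry $K_{k,\ell}^{(N)}=K_{\ell,k}^{(N)}$ does not spoil independence along a fixed row is a useful clarification that the paper leaves implicit.
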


Since we assume slightly different conditions on the graphon $\K^{(\infty)}$ we recall the proof from~\cite{KaM17} here for completeness.

\begin{proof}[Proof of Lemma~\ref{Lem:convergence:probability}]
 Using the definition of $\mu_{k}^{(N)}$ we can rewrite
 \begin{multline}\label{eq:convergence:probability:1}
  \E \int_{0}^{T}\bigl(\mu_{k}^{(N)}(t)\bigr)^{2}\dt\\*
  =\frac{1}{N^2}\E\biggl(\sum_{\ell,m=1}^{N}\int_{0}^{T}a_{k,\ell}^{(N)}(t)a_{k,m}^{(N)}(t)\dt\bigl(\bar{K}_{k,\ell}^{(N)}-K_{k,\ell}^{(N)}(\omega)\bigr)\bigl(\bar{K}_{k,m}^{(N)}-K_{k,m}^{(N)}(\omega)\bigr)\biggr).
 \end{multline}
 Since the $a_{k,\ell}^{(N)}$ are deterministic, we can pass the expectation to the expression $\bigl(\bar{K}_{k,\ell}^{(N)}-K_{k,\ell}^{(N)}(\omega)\bigr)\bigl(\bar{K}_{k,m}^{(N)}-K_{k,m}^{(N)}(\omega)\bigr)$ and due to the independence of $K_{k,\ell}^{(N)}$ and $K_{k,m}^{(N)}$ for $\ell\neq m$ we have
 \begin{equation*}
  \E\Bigl(\bigl(\bar{K}_{k,\ell}^{(N)}-K_{k,\ell}^{(N)}(\omega)\bigr)\bigl(\bar{K}_{k,m}^{(N)}-K_{k,m}^{(N)}(\omega)\bigr)\Bigr)=\E\Bigl(\bigl(\bar{K}_{k,\ell}^{(N)}-K_{k,\ell}^{(N)}(\omega)\bigr)^2\Bigr)\delta_{\ell,m}.
 \end{equation*}
 Using this in~\eqref{eq:convergence:probability:1}, we obtain
  \begin{multline}\label{eq:convergence:probability:2}
  \E \int_{0}^{T}\bigl(\mu_{k}^{(N)}(t)\bigr)^{2}\dt=\frac{1}{N^2}\sum_{\ell=1}^{N}\int_{0}^{T}\bigl(a_{k,\ell}^{(N)}(t)\bigr)^2\dt\E\Bigl(\bigl(\bar{K}_{k,\ell}^{(N)}-K_{k,\ell}^{(N)}(\omega)\bigr)^2\Bigr)\\*
  =\frac{1}{N^2}\sum_{\ell=1}^{N}\int_{0}^{T}\bigl(a_{k,\ell}^{(N)}(t)\bigr)^2\dt\Bigl(\bigl(\bar{K}_{k,\ell}^{(N)}\bigr)^2-\E\bigl(K_{k,\ell}^{(N)}(\omega)\bigr)^2\Bigr).
 \end{multline}
 The last factor on the right-hand side is just the variance of the Bernoulli random variables $K_{k,\ell}^{(N)}$ which is given by $\bar{K}_{k,\ell}^{(N)}(1-\bar{K}_{k,\ell}^{(N)})\leq 1$. Consequently, together with the assumptions on $a_{k,\ell}^{(N)}$, we can estimate the right-hand side of~\eqref{eq:convergence:probability:2} as
   \begin{equation}\label{eq:convergence:probability:3}
  \E \int_{0}^{T}\bigl(\mu_{k}^{(N)}(t)\bigr)^{2}\dt\leq \frac{1}{N^2}C^2TN=\frac{C^2T}{N}.
 \end{equation}
 To conclude the proof, we note that the Markov inequality together with~\eqref{eq:convergence:probability:3} yields
 \begin{equation*}
  \P\biggl(\int_{0}^{T}\norm[\big]{\mu^{(N)}(t)}_{2}^{2}\geq \eps\biggr)=\P\biggl(\frac{1}{N}\sum_{k=1}^{N}\int_{0}^{T}\bigl(\mu_{k}^{(N)}(t)\bigr)^2\dt\geq \eps\biggr)\leq C^{2}T(\eps N)^{-1}\to 0
 \end{equation*}
as $N\to\infty$ for any $\eps>0$ fixed. This finishes the proof.
\end{proof}

We are now prepared to prove Proposition~\ref{Prop:approximation:averaged}.

\begin{proof}[Proposition~\ref{Prop:approximation:averaged}]
 As a first step, we rewrite~\cref{eq:ODE:random,eq:ODE:averaged} as systems of first order equations, i.e.\@
 
 \begin{equation*}
  \begin{aligned}
   \dot{u}_{k}^{(N)}&=v_{k}^{(N)}\\
   \dot{v}_{k}^{(N)}&=-\alpha v_{k}^{(N)}+\frac{1}{N}\sum_{\ell}^{N}K_{k,\ell}^{(N)}D\bigl(u_{\ell}^{(N)}-u_{k}^{(N)}\bigr)+f\bigl(u_{k}^{(N)},t\bigr)
  \end{aligned}
 \end{equation*}
and 
 \begin{equation*}
  \begin{aligned}
   \dot{\phi}_{k}^{(N)}&=\psi_{k}^{(N)}\\
   \dot{\psi}_{k}^{(N)}&=-\alpha \psi_{k}^{(N)}+\frac{1}{N}\sum_{\ell=1}^{N}\bar{K}_{k,\ell}^{(N)}D\bigl(\phi_{\ell}^{(N)}-\phi_{k}^{(N)}\bigr)+f\bigl(\phi_{k}^{(N)},t\bigr).
  \end{aligned}
 \end{equation*}
Next, taking the difference of these two systems and denoting $\rho_{k}^{(N)}\vcc=u_{k}^{(N)}-\phi_{k}^{(N)}$ and $\sigma_{k}^{(N)}\vcc=\dot{\rho}_{k}^{(N)}$, we obtain upon rewriting that
\begin{equation*}
 \begin{aligned}
  \dot{\rho}_{k}^{(N)}&=\sigma_{k}^{(N)}\\
  \dot{\sigma}_{k}^{(N)}&=-\alpha \sigma_{k}^{(N)}+\frac{1}{N}\sum_{\ell=1}^{N}K_{k,\ell}^{(N)}(\omega)\Bigl(D(u_{\ell}^{(N)}-u_{k}^{(N)})-D(\phi_{\ell}^{(N)}-\phi_{k}^{(N)})\Bigr)\\
  &\qquad+\frac{1}{N}\sum_{\ell=1}^{N}\Bigl(K_{k,\ell}^{(N)}-\bar{K}_{k,\ell}^{(N)}\Bigr)D\bigl(\phi_{\ell}^{(N)}-\phi_{k}^{(N)}\bigr)\\
  &\qquad\qquad\qquad+f\bigl(u_{k}D\bigl(\phi_{\ell}^{(N)}-\phi_{k}^{(N)}\bigr)+f\bigl(\phi_{k}^{(N)},t\bigr)-f\bigl(\phi_{k}^{(N)},t\bigr).
 \end{aligned}
\end{equation*}
Multiplying the first equation by $\rho_{k}^{(N)}/N$ and the second one by $\sigma_{k}^{(N)}/N$ and summing over $k\in[N]$ we find
\begin{equation*}
 \begin{aligned}
  \frac{1}{2}\frac{\dd}{\dt}\norm{\rho^{(N)}}_{2}^{2}&=\frac{1}{N}\sum_{k=1}^{N}\rho_{k}^{(N)}\sigma_{k}^{(N)}\\
  \frac{1}{2}\frac{\dd}{\dt}\norm{\sigma^{(N)}}_{2}^{2}&=-\alpha \norm{\sigma^{(N)}}_{2}^{2}\\
  &\qquad+\frac{1}{N^2}\sum_{k,\ell=1}^{N}K_{k,\ell}^{(N)}(\omega)\Bigl(D(u_{\ell}^{(N)}-u_{k}^{(N)})-D(\phi_{\ell}^{(N)}-\phi_{k}^{(N)})\Bigr)\sigma_{k}^{(N)}\\
  &\qquad\qquad +\frac{1}{N^2}\sum_{k,\ell=1}^{N}\Bigl(K_{k,\ell}^{(N)}-\bar{K}_{k,\ell}^{(N)}\Bigr)D\bigl(\phi_{\ell}^{(N)}-\phi_{k}^{(N)}\bigr)\sigma_{k}^{(N)}\\
  &\qquad\qquad\qquad\qquad+\frac{1}{N}\sum_{k=1}^{N}\Bigl(f\bigl(\phi_{k}^{(N)},t\bigr)-f\bigl(\phi_{k}^{(N)},t\bigr)\Bigr)\sigma_{k}^{(N)}.
 \end{aligned}
\end{equation*}
Using the elementary inequality $ab\leq a^2/2+b^2/2$ for the first equation and the Lipschitz continuity of $D$ and $f$ for the second one, we can estimate as
\begin{equation}\label{eq:approximation:averaged:1}
 \begin{aligned}
   \frac{1}{2}\frac{\dd}{\dt}\norm{\rho^{(N)}}_{2}^{2}&\leq \frac{1}{2}\norm{\rho^{(N)}}_{2}^{2}+ \frac{1}{2}\norm{\sigma^{(N)}}_{2}^{2}\\
   \frac{1}{2}\frac{\dd}{\dt}\norm{\sigma^{(N)}}_{2}^{2}&\leq -\alpha \norm{\sigma^{(N)}}_{2}^{2}+\frac{L_{D}}{N^2}\sum_{k,\ell=1}^{N}K_{k,\ell}^{(N)}(\omega)\abs{\rho_{\ell}^{(N)}-\rho_{k}^{(N)}}\abs{\sigma_{k}^{(N)}}\\
   &\qquad +\frac{1}{N^2}\sum_{k=1}^{N}\biggl(\sum_{\ell=1}^{N}\Bigl(K_{k,\ell}^{(N)}-\bar{K}_{k,\ell}^{(N)}\Bigr)D\bigl(\phi_{\ell}^{(N)}-\phi_{k}^{(N)}\bigr)\biggr)\sigma_{k}^{(N)}\\
   &\qquad\qquad+\frac{L_{f}}{N}\sum_{k=1}^{N}\abs{\rho_{k}^{(N)}}\abs{\sigma_{k}^{(N)}}.
 \end{aligned}
\end{equation}
Using again $ab\leq a^2/2+b^2/2$ as well as $K_{k,\ell}^{(N)}\leq 1$ (see \cref{eq:random:coefficient:1,eq:random:coefficient:2}) we can further estimate the right-hand side to find
\begin{multline}\label{eq:approximation:averaged:2}
 \frac{1}{2}\frac{\dd}{\dt}\norm{\sigma^{(N)}}_{2}^{2}\leq -\alpha \norm{\sigma^{(N)}}_{2}^{2}+\frac{L_{D}}{N^2}\sum_{k,\ell=1}^{N}\Bigl(\frac{\abs{\rho_{\ell}^{(N)}}^2}{2}+\frac{\abs{\rho_{k}^{(N)}}^2}{2}+\abs{\sigma_{k}^{(N)}}^2\Bigr)\\*
 \shoveright{+\frac{1}{2N}\sum_{k=1}^{N}\biggl(\frac{1}{N}\sum_{\ell=1}^{N}\Bigl(K_{k,\ell}^{(N)}-\bar{K}_{k,\ell}^{(N)}\Bigr)D\bigl(\phi_{\ell}^{(N)}-\phi_{k}^{(N)}\bigr)\biggr)^{2}}\\*
 +\frac{1}{2}\norm{\sigma^{(N)}}_{2}^{2}+\frac{L_{f}}{2}\Bigl(\norm{\rho^{(N)}}_{2}^{2}+\norm{\sigma^{(N)}}_{2}^{2}\Bigr).
\end{multline}
If we define
\begin{equation*}
 a_{k,\ell}^{(N)}\vcc=D\bigl(\phi_{\ell}^{(N)}-\phi_{k}^{(N)}\bigr)\quad \text{and}\quad \mu_{k}^{(N)}\vcc=\frac{1}{N}\sum_{\ell=1}^{N}\Bigl(K_{k,\ell}^{(N)}-\bar{K}_{k,\ell}^{(N)}\Bigr)D\bigl(\phi_{\ell}^{(N)}-\phi_{k}^{(N)}\bigr)
\end{equation*}
we can finally rewrite the right-hand side of~\eqref{eq:approximation:averaged:2} as
\begin{equation}
 \frac{1}{2}\frac{\dd}{\dt}\norm{\sigma^{(N)}}_{2}^{2}\leq \Bigl(\frac{L_{f}}{2}+L_{D}\Bigr)\norm{\rho^{(N)}}_{2}^{2}+\Bigl(\frac{L_{f}+1}{2}+L_{D}-\alpha\Bigr)\norm{\sigma^{(N)}}_{2}^{2}+\frac{1}{2}\norm{\mu^{(N)}}_{2}^{2}.
\end{equation}
Using this estimate together with the estimate on $\frac{\dd}{\dt}\norm{\rho^{(N)}}_{2}^{2}$ from~\eqref{eq:approximation:averaged:1} we get
\begin{equation*}
 \frac{\dd}{\dt}\Bigl(\norm{\rho^{(N)}}_{2}^{2}+\norm{\sigma^{(N)}}_{2}^{2}\Bigr)\leq \Bigl(\frac{L_{f}}{2}+L_{D}+1+\abs{\alpha}\Bigr)\Bigl(\norm{\rho^{(N)}}_{2}^{2}+\norm{\sigma^{(N)}}_{2}^{2}\Bigr)+\frac{1}{2}\norm{\mu^{(N)}}_{2}^{2}.
\end{equation*}
By Grönwall's inequality we thus deduce
\begin{multline*}
 \sup_{t\in[0,T]}\Bigl(\norm{\rho^{(N)}}_{2}^{2}+\norm{\sigma^{(N)}}_{2}^{2}\Bigr)\\*
 \leq \biggl(\norm{\rho^{(N)}(0)}_{2}^{2}+\norm{\sigma^{(N)}(0)}_{2}^{2}+\frac{1}{2}\int_{0}^{T}\norm{\mu^{(N)}(s)}_{2}^{2}\ds\biggr)\ee^{\left(\frac{L_{f}}{2}+L_{D}+1+\abs{\alpha}\right)T}
\end{multline*}
By assumption $\norm{\rho^{(N)}(0)}_{2}^{2}+\norm{\sigma^{(N)}(0)}_{2}^{2}\to 0$ and Lemma~\ref{Lem:convergence:probability} yields $\int_{0}^{T}\norm{\mu^{(N)}(s)}_{2}^{2}\ds\to 0$ in probability as $N\to\infty$, which ends the proof.
\end{proof}

We are now prepared to give the proof of Theorem~\ref{Thm:continuum:limit:random} which is actually an immediate consequence of the previous results.

\begin{proof}[Proof of Theorem~\ref{Thm:continuum:limit:random}]
 By assumption $u^{(N)}=(u_{1}^{(N)},\ldots, u_{N}^{(N)})$ is the solution to~\eqref{eq:ODE} with kernel $K_{k,\ell}^{(N)}(\omega)$ and initial condition $u_{k}^{(N)}(0)=g_{k}^{(N)}$ and $\frac{\dd}{\dt}u_{k}^{(N)}(0)=h_{k}^{(N)}$ whereas $\phi\in C^{2}([0,\infty),L^{\infty}(I))$ solves~\eqref{eq:cont:limit} with initial condition $\phi(\cdot,0)=g$ and $\frac{\dd}{\dt}\phi(\cdot,0)=h$. Let now $\phi^{(N)}=(\phi_{1}^{(N)},\ldots,\phi_{N}^{(N)})$ be the solution to~\eqref{eq:ODE:averaged} with the same initial data as $u$, i.e.\@ $\phi_{k}^{(N)}(0)=g_{k}^{(N)}$ and $\frac{\dd}{\dt}\phi_{k}^{(N)}(0)=h_{k}^{(N)}$. Then, we have 
 \begin{multline}\label{eq:proof:cont:limit:stoch:1}
  \sup_{t\in[0,T]}\norm*{\phi(\cdot,t)-\sum_{k=1}^{N}u_{k}^{(N)}\chi_{[(k-1)/N,k/N]}(\cdot)}_{L^{2}(I)}\\*
  \leq \sup_{t\in[0,T]}\norm*{\phi(\cdot,t)-\sum_{k=1}^{N}\phi_{k}^{(N)}\chi_{[(k-1)/N,k/N]}(\cdot)}_{L^{2}(I)}+\sup_{t\in[0,T]}\norm*{\phi^{(N)}(t)(\cdot,t)-u^{(N)}(t)}_{2}.
 \end{multline}
 By construction, $\bar{K}_{k,\ell}^{(N)}=\E K_{k,\ell}^{(N)}(\omega)=\K^{(\infty)}(k/N,\ell/N)$ (see~\eqref{eq:random:coefficient:2}) such that the graphon sequence $\bar{\K}^{(N)}$ corresponding to $\bar{K}_{k,\ell}$ (see~\eqref{eq:graph:sequence}) converges to $\K^{(\infty)}$ in $L^{2}(I\times I)$, i.e.\@ $\norm{\bar{\K}^{(N)}-\K^{(\infty)}}_{L^{2}(I\times I)}\to 0$ as $N\to\infty$ according to Lemma~\ref{Lem:graph:inverse:convergence}. Consequently, Theorem~\ref{Thm:continuum:limit} implies that
 \begin{equation}\label{eq:proof:cont:limit:stoch:2}
  \sup_{t\in[0,T]}\norm*{\phi(\cdot,t)-\sum_{k=1}^{N}\phi_{k}^{(N)}\chi_{[(k-1)/N,k/N]}(\cdot)}_{L^{2}}\longrightarrow 0\qquad \text{as }N\to \infty.
 \end{equation}
 Furthermore, Proposition~\ref{Prop:approximation:averaged} ensures that
 \begin{equation}\label{eq:proof:cont:limit:stoch:3}
  \sup_{t\in[0,T]}\norm*{\phi^{(N)}(t)(\cdot,t)-u^{(N)}(t)}_{2} \longrightarrow 0\qquad \text{in probability as }N\to\infty.
 \end{equation}
 Combining~\cref{eq:proof:cont:limit:stoch:1,eq:proof:cont:limit:stoch:2,eq:proof:cont:limit:stoch:3} the claim follows.
\end{proof}

\section{Stability}

In this section we are going to study a more explicit model of oscillator networks. More precisely, we restrict here to graphons of the special form
\begin{equation}\label{eq:kernel:Kp}
 \K(x,y)=\K_{p}(x-y)
\end{equation}
with a non-negative and symmetric function $\K_{p}\colon I\to [0,\infty)$ which can be periodically extended to $\R$. The index $p$ indicates a parameter which affects the structure of the graphon (see Section~\ref{Sec:instability}). Note also that we drop here the label $\infty$ since we will only work with the continuum limit from now on.
\begin{remark}
 The assumption $\K(x,y)=\K_{p}(x-y)$ with $\K_{p}$ being symmetric directly gives that $\K_{p}$ in fact only depends on $\abs{x-y}$. This already suggests that the graphs approximated by such graphons have an underlying ordering as for example the one dimensional small-world model.
\end{remark}

Moreover, we also consider the special choices
\begin{equation}\label{eq:special:nonlinearity}
D(x)=K\sin(2\pi x),\qquad f\equiv 0 \qquad \text{and} \qquad \alpha>0
\end{equation}
with a constant $K\geq 0$, i.e.\@ we are interested in the equation
\begin{equation}\label{eq:instability:1}
 \ddot{\phi}+\alpha\dot{\phi}=K\int_{I}\K_{p}(y-x)\sin\bigl(2\pi(\phi(y)-\phi(x))\bigr)\dy.
\end{equation}
Obviously, each constant $\Phi\in\R$ is a steady state of~\eqref{eq:instability:1}, i.e.\@ $\Phi$ solves~\eqref{eq:instability:1}.

\begin{remark}
 Note that the calculations below are also true for $\alpha <0$ while the case $\alpha=0$ needs some adaptations. However, since for $\alpha\leq 0$ the constant stationary states are linearly unstable and we are mainly interested in stable states and their transition to instability, we restrict to $\alpha>0$ to keep the presentation simple.
\end{remark}

The main goal of this section will be to examine the linear stability of these constant steady states and to study how the stability properties change upon varying the structure of the graph. Our approach and the graph model we are considering is motivated mainly by~\cite{Med14b} and we also mainly follow the notation used there. 

\subsection{Stability of the constant steady state}\label{Sec:argument:stability}

We adopt and modify the strategy given in~\cite[Section~3]{Med14b}. In fact it is shown in~\cite[Lemma~3.5]{Med14b} that
\begin{equation*}
 \int_{I\times I}\K_{p}(y-x)\sin\bigl(2\pi(\phi(y)-\phi(x))\bigr)\dy\dx\equiv 0
\end{equation*}
due to the symmetry of $\K_{p}$. Thus, integrating~\eqref{eq:instability:1} over $I$ we obtain
\begin{equation}\label{eq:zero:moment}
 \frac{\dd^2}{\dt^2}\int_{I}\phi(x,t)\dx+\alpha \frac{\dd}{\dt}\int_{I}\phi(x,t)\dx=0.
\end{equation}
Note that the regularity of $\phi$ provided by Proposition~\ref{Prop:well:posed:cont:limit} allows to exchange the order of integration and differentiation. The latter equation can be solved explicitly and we precisely obtain
\begin{equation}\label{eq:zero:moment:explicit}
 \int_{I}\phi(x,t)\dx=\int_{I}\phi(x,0)\dx\ee^{-\alpha t}+\frac{1}{\alpha}\int_{I}\dot{\phi}(x,0)+\alpha \phi(x,0)\dx\bigl[1-\ee^{-\alpha t}\bigr].
\end{equation}
In particular, we have two conserved quantities of the evolution~\eqref{eq:zero:moment}, namely
\begin{equation*}
 \int_{I}\dot{\phi}(x,t)+\alpha \phi(x,t)\dx \quad \text{and}\quad -\frac{1}{\alpha}\int_{I}\dot{\phi}(x,t)\dx\ee^{\alpha t}.
\end{equation*}
In order to study the linear stability of a (constant) stationary state $\Phi$ of~\eqref{eq:instability:1}, we linearise around the latter and consider the spectrum of the resulting equation. More precisely, we plug the ansatz $\phi(x,t)=\Phi+\rho(x,t)$ into~\eqref{eq:instability:1} and keep only the linear expressions in the resulting equations which leads to
\begin{equation}\label{eq:linearised:0}
 \ddot{\rho}+\alpha\dot{\rho}=2\pi K\int_{I}\K_{p}(y-x)\bigl[\rho(y,t)-\rho(x,t)\bigr]\dy.
\end{equation}
Expanding $\rho$ as a Fourier series
\begin{equation*}
 \rho(x,t)=\sum_{k\in\Z}\hat{\rho}_{k}(t)\ee^{-2\pi \im k x} \quad \text{with}\quad \hat{\rho}_{k}=\int_{I}\rho(x,t)\ee^{2\pi \im k x}\dx
\end{equation*}
we obtain that~\eqref{eq:linearised:0} can be equivalently expressed by the following system of ordinary differential equation for the Fourier coefficients 
\begin{equation}\label{eq:ODE:Fourier}
 \frac{\dd^2}{\dt^2}\hat{\rho}_{m}+\alpha \frac{\dd^2}{\dt}\hat{\rho}_{m}=K\bigl[\widehat{\K}_{p}(m)-\widehat{\K}_{p}(0)\bigr]\hat{\rho}_{m}\quad \text{for } m\in\Z
\end{equation}
where $\widehat{\K}_{p}(m)=\int_{I}\K_{p}(x)\ee^{2\pi \im m x}\dx$.
\begin{remark}
 It will turn out that the Fourier coefficient corresponding to $m=0$ has to be treated separately since one of the eigenvalues corresponding to $m=0$ is zero. The same in principle also happens for the case of the Kuramoto model considered in~\cite{Med14b} but in this case the equation is only of first order in time and consequently the zeroth Fourier coefficient is just constant. In contrast, in our case this is no longer true but for $m=0$ we observe that~\eqref{eq:ODE:Fourier} is equivalent to~\eqref{eq:zero:moment} for which we have an explicit solution and in particular exponential convergence to a constant value. As a consequence, it suffices to consider only $m\neq 0$ in the following.
\end{remark}

In order to study the stability for $m\in\Z\setminus\{0\}$, we rewrite~\eqref{eq:ODE:Fourier} as a system of first order equations which yields
\begin{equation*}
 \begin{split}
  \frac{\dd}{\dt}\hat{\rho}_{m}&=\hat{\nu}_{m}\\
  \frac{\dd}{\dt}\hat{\nu}_{m}&=-\alpha \hat{\nu}_{m}+K\bigl[\widehat{\K}_{p}(m)-\widehat{\K}_{p}(0)\bigr]\hat{\rho}_{m}.
 \end{split}
\end{equation*}
The corresponding matrix then has the form
\begin{equation*}
 \begin{pmatrix}
  0 & 1\\
  K\bigl[\widehat{\K}_{p}(m)-\widehat{\K}_{p}(0)\bigr] & -\alpha
 \end{pmatrix}
\end{equation*}
and the eigenvalues can be computed to be
\begin{equation}\label{eq:eigenvalues}
 \lambda_{\pm}(m)=-\frac{\alpha}{2}\pm\sqrt{\Bigl(\frac{\alpha}{2}\Bigr)^{2}+K\bigl[\widehat{\K}_{p}(m)-\widehat{\K}_{p}(0)\bigr]}.
\end{equation}
In order to show stability we have to show that $\lambda_{\pm}(m)<0$ for all $m\in \Z\setminus\{0\}$. In fact, if the latter holds, $\hat{\rho}_{m}$ converges to zero as $t\to\infty$ while $\hat{\rho}_{0}(t)\to \hat{\nu}_{0}(0)/\alpha+\hat{\rho}_{0}(0)$ as $t\to\infty$. The last convergence is an immediate consequence of~\eqref{eq:zero:moment:explicit}. Consequently, Parseval's identity yields that $\rho(x,t)$ converges to this constant as well.

Thus, we are led to consider the expression $\widehat{\K}_{p}(m)-\widehat{\K}_{p}(0)$ more closely. In fact, we can rewrite this as
\begin{multline*}
 \widehat{\K}_{p}(m)-\widehat{\K}_{p}(0)=\int_{I}\K_{p}(x)\bigl[\ee^{2\pi \im m x}-1\bigr]\dx\\*
 =\int_{I}\K_{p}(x)\bigl[\cos(2\pi m x)-1\bigr]\dx+\im \int_{I}\K_{p}(x)\sin(2\pi m x)\dx.
\end{multline*}
To proceed, we recall that by assumption $\K_{p}(x)\sin(2\pi mx)$ is periodic with period one and antisymmetric. Consequently, we have $\int_{I}\K_{p}(x)\sin(2\pi m x)\dx=0$ which yields
\begin{equation}\label{eq:spectral:1}
 \widehat{\K}_{p}(m)-\widehat{\K}_{p}(0)=\int_{I}\K_{p}(x)\bigl[\cos(2\pi m x)-1\bigr]\dx.
\end{equation}
Since $\K_{p}\geq 0$ and $\cos(2\pi m \cdot)\leq 1$ we thus conclude $\widehat{\K}_{p}(m)-\widehat{\K}_{p}(0)\leq 0$. However, if $m\neq 0$ we obtain even more because $\K_{p}\not \equiv 0$ and thus $\widehat{\K}_{p}(m)-\widehat{\K}_{p}(0)<0$ for $m\neq 0$. Using this observation in~\eqref{eq:eigenvalues}, we obtain $\lambda_{\pm}(m)<0$ for all $m\in\Z\setminus\{0\}$ and thus linear stability of the steady state.

In summary, we have shown the following statement.

\begin{proposition}
\label{Prop:linear:stability}
 Each constant $\Phi\in\R$ is a stationary state to~\eqref{eq:instability:1}, which is linearly stable for $\alpha>0$.
\end{proposition}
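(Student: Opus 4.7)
The plan is to split the statement into two claims and address each in turn. For the stationarity claim, substituting $\phi(x,t) \equiv \Phi$ into~\eqref{eq:instability:1} makes the integrand vanish pointwise since $\sin(2\pi(\Phi - \Phi)) = 0$, and both time derivatives vanish, so any constant is a steady state. For linear stability, I would linearise via $\phi(x,t) = \Phi + \rho(x,t)$, keep only first-order terms in $\rho$, and obtain~\eqref{eq:linearised:0}; the strategy is then to diagonalise the resulting linear operator using Fourier series on $I$, which produces the decoupled family~\eqref{eq:ODE:Fourier} of second-order scalar ODEs indexed by $m \in \Z$.

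The mode $m = 0$ needs special handling because $\widehat{\K}_p(0) - \widehat{\K}_p(0) = 0$ makes the $m=0$ equation degenerate. Here I would use the conservation identity~\eqref{eq:zero:moment}, which is the exact (not merely linearised) integrated equation and whose explicit solution~\eqref{eq:zero:moment:explicit} already shows $\hat{\rho}_0(t) \to \hat{\nu}_0(0)/\alpha + \hat{\rho}_0(0)$ exponentially as $t\to\infty$. For each $m \neq 0$, I would rewrite~\eqref{eq:ODE:Fourier} as a planar first-order linear system, read off the companion matrix, and compute its eigenvalues to obtain~\eqref{eq:eigenvalues}.

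The core step is then to show $\mathrm{Re}\,\lambda_\pm(m) < 0$ for all $m \neq 0$, which reduces to verifying $\widehat{\K}_p(m) - \widehat{\K}_p(0) < 0$. Using the symmetry of $\K_p$ (which kills the imaginary part of the Fourier integral) this difference equals $\int_I \K_p(x)[\cos(2\pi m x) - 1]\,\dx$ as in~\eqref{eq:spectral:1}. Since $\K_p \geq 0$, $\K_p \not\equiv 0$, and $\cos(2\pi m x) - 1$ is strictly negative on a set of positive measure for $m \neq 0$, the integral is strictly negative. Plugging this into~\eqref{eq:eigenvalues} together with $\alpha > 0$ gives either two real eigenvalues, both negative (the positive square root is bounded above by $\alpha/2$ since the radicand is strictly less than $(\alpha/2)^2$), or a complex conjugate pair with real part $-\alpha/2 < 0$. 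Finally, Parseval's identity transfers the pointwise-in-$m$ decay to $L^2$-decay of $\rho(\cdot,t)$ to a constant limit determined by $\hat{\rho}_0$.

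The main obstacle is conceptual rather than computational: one has to recognise that the zero eigenvalue for $m = 0$ is not a genuine source of instability but reflects the invariance of the problem under adding a constant to $\phi$, and that the proper substitute for the linearised analysis in that mode is the exact conservation law~\eqref{eq:zero:moment}. Beyond this, the argument is essentially a symmetry-and-positivity calculation on the Fourier side.
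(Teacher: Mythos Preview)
Your proposal is correct and follows essentially the same route as the paper: linearise to obtain~\eqref{eq:linearised:0}, diagonalise via Fourier series to get~\eqref{eq:ODE:Fourier}, treat the $m=0$ mode separately through the exact conservation law~\eqref{eq:zero:moment}--\eqref{eq:zero:moment:explicit}, and for $m\neq 0$ use symmetry and positivity of $\K_p$ to show $\widehat{\K}_p(m)-\widehat{\K}_p(0)<0$, hence $\mathrm{Re}\,\lambda_{\pm}(m)<0$ from~\eqref{eq:eigenvalues}. Your explicit case split (real eigenvalues versus complex conjugate pair) and the remark on translation invariance are slight elaborations, but the structure and all key steps match the paper's argument.
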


\section{Approaching instability}\label{Sec:instability}

In this section, we will demonstrate that the constant steady states $\Phi$, which have been proven to be linearly stable in Section~\ref{Sec:argument:stability}, may only be marginally stable upon varying parameters in the model. More precisely, one immediately sees from~\eqref{eq:eigenvalues} that the spectrum of the linearised operator approaches the imaginary axis if we reduce the coupling strength $K$ between the oscillators. This means, that for a system with very weak coupling, although the steady state $\Phi$ is still linearly stable, a rather small perturbation suffices to destabilise the system. 

In addition to this, we are mainly interested in the question how the \emph{structure of the graph} influences the (linear) stability of steady states. The result that we obtain is similar to the previous observation, i.e.\@ for a variant of the small-world graph the system can be put arbitrarily close to instability by choosing the structure of the graph accordingly.

\subsection{A modification of the small-world network}

As a preparation of the following discussion of destabilisation, we precisely describe in this section the graph model we are going to consider and we collect several basic properties and auxiliary results. As already announced above, we want to look at a modification of the one dimensional small-world network and the corresponding graphon. 

The starting point for both models is the $m$-nearest-neighbour graph which consists of $N$ nodes such that $2m<N$ and which are arranged on a circle. This induces a canonical (discrete) distance between the nodes given by
\begin{equation*}
 \dist_{N}(k,\ell)=\min\bigl\{\abs{k-\ell}, N-\abs{k-\ell}\bigr\}.
\end{equation*}
This produces a graph $\tilde{G}_{N}=\langle [N], E(\tilde{G}_{N})\rangle$ with $E(\tilde{G}_{N})=\{(k,\ell)\in [N]\times [N]\;|\; 0<\dist_{N}(k,\ell)\leq m\}$. 

From this graph $\tilde{G}_{N}$ the classical Watts-Strogatz small-world graph $G_{N}$ is then constructed by rewiring several of these short range connections. More precisely, one iterates through $E(\tilde{G}_{N})$ and with probability $p$ one removes the current edge and rewires one end with a new random location. More details can be found for example in \cite{New03}.

Following~\cite{Med14b}, to take the limit $N\to \infty$ in this model, one fixes two parameters $r\in(0,1/2)$ and $p\in(0,1/2)$ and considers the sequence $G_{N}$ of small-world graphs constructed from the $\lfloor rN \rfloor$-nearest-neighbour graph $\tilde{G}_{N}$ with probability $p$. Then, in the limit $N\to\infty$, the sequence $G_{N}$ converges to a graphon $\SW_{p,r}$ which is given by
\begin{equation*}
 \SW_{p,r}=(1-p)W_{r}+p(1-W_{r})\quad \text{with}\quad W(x,y)=\begin{cases}
                                                       1 & \text{if }\dist(x,y)\leq r\\
                                                       0 & \text{else}.
                                                      \end{cases}
\end{equation*}
Here, $\dist(x,y)=\min\{\abs{x-y}, 1-\abs{x-y}\}$ is the natural generalisation of $\dist_{N}$. Introducing moreover
\begin{equation}\label{eq:definition:Gr}
 G_{s}(x)=\begin{cases}
           1 & \text{if } \pernorm(x)<s\\
           0 & \text{else}
          \end{cases}
\quad \text{with} \quad \pernorm(x)=\min\{\abs{x},1-\abs{x}\}\text{ for } x\in[-1,1]        
\end{equation}
the small-world model can also be expressed as
\begin{equation*}
 \SW_{p,r}(x,y)=pG_{1/2}(x-y)+(1-2p)G_{r}(x-y).
\end{equation*}

As already noted before, we want to consider in this work a slight modification of the small-world network. Precisely, instead of rewiring certain edges in $\tilde{G}_{N}$, we consider a model where we insert with probability $p$ new edges without removing any edge. For this model, we can proceed analogously to the small-world graph and we obtain in the limit $N\to\infty$ a graphon $\K_{p,r}(x,y)$. Precisely, writing by abuse of notation $\K_{p,r}(x,y)=\K_{p,r}(x-y)$ we have that
\begin{equation}\label{eq:modified:kernel}
 \K_{p,r}(x)=pG_{1/2}(x)+(1-2p)G_{r}(x) \quad \text{with }r\in(0,1/2)
\end{equation}
with $G_{s}$ as in~\eqref{eq:definition:Gr}.

To conclude this subsection let us prove for completeness that $G_{s}$ is a periodic function.

\begin{lemma}\label{Lem:periodicity:d}
 The functions $\pernorm(\cdot)$ and $G_{s}$ as given in~\eqref{eq:definition:Gr} can be extended periodically to all of $\R$ with period equal to one.
\end{lemma}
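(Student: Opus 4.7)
The plan is to prove the consistency relation $\pernorm(x)=\pernorm(x+1)$ on the overlap $x\in[-1,0]$ (where both sides are already defined by the original formula on $[-1,1]$). This identity is precisely what is needed to conclude that there is a well-defined $1$-periodic extension: one then selects a fundamental interval of length $1$, say $J=[-1/2,1/2]$, and for an arbitrary $x\in\R$ writes $x=n+y$ with $y\in J$, $n\in\Z$, and sets $\pernorm(x):=\pernorm(y)$. Because $G_{s}$ is merely a level set of $\pernorm$, i.e.\ $G_{s}(x)=\mathbf{1}_{\{\pernorm(x)<s\}}$, periodicity of $G_{s}$ follows immediately from the periodicity of $\pernorm$.

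The core computation is the following. For $x\in[-1,0]$ we have $\abs{x}=-x$ and $1-\abs{x}=1+x$, so
\begin{equation*}
 \pernorm(x)=\min\bigl\{-x,\,1+x\bigr\}.
\end{equation*}
On the other hand $y\vcc=x+1\in[0,1]$ satisfies $\abs{y}=x+1$ and $1-\abs{y}=-x$, hence
\begin{equation*}
 \pernorm(y)=\min\bigl\{x+1,\,-x\bigr\}.
\end{equation*}
The two minima coincide; in particular, the values at the common endpoints match ($\pernorm(-1)=\pernorm(0)=0$ and $\pernorm(0)=\pernorm(1)=0$). Thus the extension is unambiguous.

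To finish the argument for $\pernorm$, I would verify that the prescription via the fundamental domain $J$ agrees with the original formula on the whole of $[-1,1]$: for $x\in[1/2,1]$ one has $y=x-1\in[-1/2,0]$ and the equality $\pernorm(x)=\pernorm(x-1)$ is exactly the identity established above (with $x$ replaced by $x-1$); a symmetric argument handles $x\in[-1,-1/2]$. Hence the extended $\pernorm$ is a function on all of $\R$ satisfying $\pernorm(x+1)=\pernorm(x)$. The claim for $G_{s}$ follows at once: $G_{s}(x+1)=\mathbf{1}_{\{\pernorm(x+1)<s\}}=\mathbf{1}_{\{\pernorm(x)<s\}}=G_{s}(x)$.

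There is no real obstacle here beyond careful interval bookkeeping; the content of the lemma is entirely captured by the single identity $\min\{-x,1+x\}=\min\{x+1,-x\}$, which is a tautology. The only mild subtlety is to pick a fundamental domain whose preimage in $[-1,1]$ under the map $x\mapsto x\bmod 1$ is consistent, which is why $J=[-1/2,1/2]$ (or equivalently $[0,1)$) is a convenient choice.
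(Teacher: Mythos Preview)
Your proof is correct and follows essentially the same route as the paper: both reduce to verifying the identity $\pernorm(x+1)=\pernorm(x)$ for $x\in[-1,0]$ by computing $\abs{x+1}=1-\abs{x}$ and $1-\abs{x+1}=\abs{x}$, and then note that periodicity of $G_{s}$ follows immediately from that of $\pernorm$. Your additional remarks about the fundamental domain and consistency on $[-1,1]$ are a bit more explicit than the paper's version but add nothing essential.
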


\begin{proof}
 We only show the claim for $\pernorm(\cdot)$ since the one for $G_{s}$ then follows immediately due to the definition. Since $\pernorm(\cdot)$ is only defined on $[-1,1]$ it suffices to verify that $\pernorm(x+1)=\pernorm(x)$ for all $x\in[-1,0]$. To see the latter, we note that $x\in[-1,0]$ implies
 \begin{equation*}
  \abs{x+1}=1+x=1-\abs{x} \quad \text{as well as} \quad 1-\abs{x+1}=1-1-x=-x=\abs{x}.
 \end{equation*}
 Together this shows
 \begin{equation*}
  \pernorm(x+1)=\min\{\abs{x+1},1-\abs{x+1}\}=\min\{\abs{x},1-\abs{x}\}=\pernorm(x)
 \end{equation*}
for $x\in[-1,0]$.
\end{proof}

\begin{remark}\label{Rem:Kp:even}
 Due to definition the function $\K_{p,r}$, or rather its periodic extension,  is symmetric, i.e.\@ $\K_{p,r}(-x)=\K_{p,r}(x)$ for all $x\in\R$
\end{remark}

\subsection{Moving the spectrum to the imaginary axis}

In this section, we will consider~\eqref{eq:instability:1} with the kernel $\K_{p,r}$ given by~\eqref{eq:modified:kernel} and we are going to show that analogously to the coupling strength $K$, in the limit $p,r\to 0$ the spectrum of the linearised operator comes arbitrarily close to the imaginary axis. For the type of graph under consideration this then shows that if local edges only connect vertices which are close together and if the density of long-range connections is small, the coupled system is close to being destabilised.

Thus, we want to show that the real part of the spectrum gets arbitrarily small if $p,r\to 0$. According to the discussion in Section~\ref{Sec:argument:stability}, it suffices to consider~\eqref{eq:spectral:1} for $m\neq 0$ with $\K_p$ replaced by $\K_{p,r}$, i.e.\@ the term
\begin{equation}\label{eq:unstable:1}
 \widehat{\K}_{p,r}(m)-\widehat{\K}_{p,r}(0)=\int_{I}\K_{p,r}(x)\bigl[\cos(2\pi m x)-1\bigr]\dx.
\end{equation}
We recall from~\cite[eq.\@~(4.1)]{Med14b} that
\begin{equation*}
 \int_{0}^{1}G_{s}(x)\cos(2\pi mx)\dx=\begin{cases}
                                       2s & \text{if } m=0\\
                                       \frac{1}{\pi m}\sin(2\pi m s) & \text{else}.
                                      \end{cases}
\end{equation*}
Using this in~\eqref{eq:unstable:1} together with~\eqref{eq:modified:kernel} yields
\begin{equation*}
 \widehat{\K}_{p,r}(m)-\widehat{\K}_{p,r}(0)=p\Bigl(\frac{\sin(\pi m)}{\pi m}-1\Bigr)+(1-p)\Bigl(\frac{\sin(2\pi m r)}{\pi m}-2r\Bigr).
\end{equation*}
One immediately checks that for $m\neq 0$ the expression on the right-hand side is strictly negative unless $r=p=0$ in which case we obtain zero. Thus, the spectrum of the linearisation in fact gets arbitrarily close to the imaginary axis if $p,r\to 0$.

\appendix

\section{Proof of Proposition~\ref{Prop:well:posed:cont:limit}}\label{Sec:proof:existence}

\begin{proof}[Proof of Proposition~\ref{Prop:well:posed:cont:limit}]
 The proof is standard and relies on rewriting~\eqref{eq:cont:limit:general} as a system of first order equations and applying the Banach fixed-point theorem. In fact, \eqref{eq:cont:limit:general} can be rewritten as
 \begin{equation*}
  \begin{split}
   \dot{\phi}&=\psi\\
   \dot{\psi}&=-\alpha \psi+ \int_{I}\K(x,y)D\bigl(\phi(y)-\phi(x)\bigr)\dy +f(\phi,t)
  \end{split}
 \end{equation*}
 Furthermore, exploiting the initial condition, we can integrate in time to obtain
  \begin{equation}\label{eq:int:cont:limit}
  \begin{split}
   \phi(x,t)&= g(x)+\int_{0}^{t}\psi(x,s)\ds\\
   \psi(x,t)&=h(x)-\int_{0}^{t}\biggl(\alpha \psi(x,s)- \int_{I}\K(x,y)D\bigl(\phi(y,s)-\phi(x,s)\bigr)\dy +f\bigl(\phi(x,s),s\bigr)\biggr)\ds.
  \end{split}
 \end{equation}
 We define now the operator $\B\colon \bigl(C([0,T],L^{\infty}(I))\bigr)^2\to \bigl(C([0,T],L^{\infty}(I))\bigr)^2$ via
   \begin{multline*}
   (\phi,\psi)\mapsto \Biggl(g(x)+\int_{0}^{t}\psi(x,s)\ds,\\*
   h(x)-\int_{0}^{t}\biggl(\alpha \psi(x,s)- \int_{I}\K(x,y)D\bigl(\phi(y,s)-\phi(x,s)\bigr)\dy +f\bigl(\phi(x,s),s\bigr)\biggr)\ds\Biggr).
  \end{multline*}
  Then, $(\phi,\psi)$ is a solution to~\eqref{eq:int:cont:limit} if and only if $(\phi,\psi)$ is a fixed-point for $\B$. The latter can be obtained easily by an application of the Banach fixed-point theorem provided $T$ is sufficiently small. We only show that $K$ is a contraction since the other requirements are obvious.
  \begin{multline*}
   \phantom{{}\leq{}}\norm*{\B(\phi^{(I)},\psi^{(I)})-\B(\phi^{(II)},\psi^{(II)})}_{(C([0,T],L^{\infty}))^2}\\*
   \shoveleft{\leq \sup_{t\in[0,T]}\sup_{x\in I}\Biggl[\abs*{\int_{0}^{t}\bigl(\psi^{(I)}-\psi^{(II)}\bigr)(x,s)\ds}+\biggl|\int_{0}^{t}\Bigl\{\alpha\bigl(\psi^{(I)}-\psi^{(II)}\bigr)(x,s)}\\*
   +\int_{I}\K(x,y)\bigl[D\bigl(\phi^{(I)}(y,s)-\phi^{(I)}(x,s)\bigr)-D\bigl(\phi^{(II)}(y,s)-\phi^{(II)}(x,s)\bigr)\bigr]\dy\\*
   \shoveright{+\bigl[f\bigl(\phi^{(I)}(x,s)\bigr)-f\bigl(\phi^{(II)}(x,s)\bigr)\bigr]\Bigr\}\ds\biggr|\Biggr]}\\*
   \shoveleft{\leq (1+\abs{\alpha})T\norm*{\psi^{(I)}-\psi^{(II)}}_{C([0,T],L^{\infty})}+L_{f}T\norm*{\phi^{(I)}-\phi^{(II)}}_{C([0,T],L^{\infty})}}\\*
   +\int_{0}^{t}\norm{\K}_{L^{\infty}(I^2)}L_{D}\int_{I}\abs*{\phi^{(I)}(y,s)-\phi^{(I)}(x,s)-\phi^{(II)}(y,s)+\phi^{(II)}(x,s)}\dy\ds.
  \end{multline*}
  Using $\int_{I}\abs*{\phi^{(I)}(y,s)-\phi^{(I)}(x,s)-\phi^{(II)}(y,s)+\phi^{(II)}(x,s)}\dy\leq 2\norm{\phi^{(I)}-\phi^{(II)}}_{C([0,T],L^{\infty})}$ we further obtain
  \begin{multline}
    \phantom{{}\leq{}}\norm*{\B(\phi^{(I)},\psi^{(I)})-\B(\phi^{(II)},\psi^{(II)})}_{(C([0,T],L^{\infty}))^2}\\*
    \shoveleft{\leq (1+\abs{\alpha})T\norm*{\psi^{(I)}-\psi^{(II)}}_{C([0,T],L^{\infty})}}+\Bigl(L_{f}+2L_{D}\norm{\K}_{L^{\infty}(I^2)}\Bigr)T\norm*{\phi^{(I)}-\phi^{(II)}}_{C([0,T],L^{\infty})}\\*
    \leq \Bigl(L_{f}+2L_{D}\norm{\K}_{L^{\infty}(I^2)}+(1+\abs{\alpha})\Bigr)T\norm*{(\phi^{(I)},\psi^{(I)})-(\phi^{(II)},\psi^{(II)})}_{(C([0,T],L^{\infty}))^2}.
   \end{multline}
 Thus, choosing $T$ sufficiently small (while smallness depends here only on $L_{f}$, $L_D$, $\K$ and $\alpha$) the operator $\B$ is a contraction and thus yields the existence of a unique fixed-point in $(C([0,T],L^{\infty}))^2$. The proof is finished by extending this solution in the usual way while regularity in time follows directly from the structure of $\B$.
\end{proof}

\textbf{Acknowledgments:} This work was supported by a Lichtenberg professorship of the VolkswagenStiftung.

\end{document}